\definecolor{mulberry}{rgb}{0.77, 0.29, 0.55}
\numberwithin{equation}{section}
\newtheorem{theorem}{Theorem}[section]
\theoremstyle{plain}
\newtheorem{lemma}[theorem]{Lemma}
\theoremstyle{plain}
\newtheorem{proposition}[theorem]{Proposition}
\theoremstyle{plain}
\theoremstyle{definition}
\newtheorem{remark}[theorem]{Remark}
\newtheorem{example}[theorem]{Example}
\newcommand{\N}{{\mathbb N}}
\newcommand{\R}{{\mathbb R}}
\newcommand{\eps}{\varepsilon}
\newcommand{\beq}{\begin{equation}}
\newcommand{\eeq}{\end{equation}}
\renewcommand{\le}{\leqslant}
\renewcommand{\ge}{\geqslant}
\newcommand{\w}{W^{s,p}_0(\Omega)}
\newcommand{\fpl}{(-\Delta)_p^s\,}
\newcommand{\ds}{{\rm d}_\Omega^s}
\newcommand{\leqnomode}{\tagsleft@true}
\newcommand{\reqnomode}{\tagsleft@false}
\newenvironment{enumroman}{\begin{enumerate}

}{\end{enumerate}}
\title[Fractional $p$-Laplacian with jumping reactions]{Multiple solutions for the fractional $p$-Laplacian\\ with jumping reactions}
\author[S.\ Frassu, A.\ Iannizzotto]{Silvia Frassu, Antonio Iannizzotto}
\address[S.\ Frassu, A.\ Iannizzotto]{Department of Mathematics and Computer Science
\newline\indent
University of Cagliari
\newline\indent
Via Ospedale 72, 09124 Cagliari, Italy}
\email{silvia.frassu@unica.it, antonio.iannizzotto@unica.it}
\subjclass[2010]{35P30, 35R11, 47H11.}
\keywords{Fractional $p$-Laplacian, Jumping reactions, Degree theory.}
\begin{document}

\begin{abstract}
We study a nonlinear elliptic equation driven by the degenerate fractional $p$-Laplacian, with Dirichlet type condition and a jumping reaction, i.e., $(p-1)$-linear both at infinity and at zero but with different slopes crossing the principal eigenvalue. Under two different sets of hypotheses, entailing different types of asymmetry, we prove the existence of at least two nontrivial solutions. Our method is based on degree theory for monotone operators and nonlinear fractional spectral theory.
\end{abstract}

\maketitle

\begin{center}
Version of \today\
\end{center}

\section{Introduction}\label{sec1}

\noindent
Nonlinear elliptic equations with {\em jumping} (a.k.a.\ asymmetric, or crossing) reactions represent a classical subject of investigation in nonlinear analysis. Such equations can be written in the following general form:
\[-L_p u = f(x,u) \quad \text{in $\Omega$,}\]
coupled with some boundary conditions. Here $\Omega$ is some domain, $L_p$ is an elliptic operator, which is $(p-1)$-homogeneous for some $p>1$ (linear if $p=2$), and $f:\Omega\times\R\to\R$ is a Carath\'eodory function s.t.\ the quotient
\[t\mapsto\frac{f(x,t)}{|t|^{p-2}t}\]
has different finite limits for $|t|\to\infty$ and/or $t\to 0$. The study of such problems goes back to \cite{AP}, with relevant contributions from \cite{F} (where the term 'jumping' was also introduced), \cite{R1}, and \cite{CT} (where a general abstract formulation of the problem was given). All the cited works deal with the semilinear case. In the quasilinear case, we recall the results of \cite{APS1,HP1,MP} (dealing with the Dirichlet $p$-Laplacian), \cite{APS} (dealing with the Neumann $p$-Laplacian). In the nonlocal framework, we recall \cite{IP} (dealing with the fractional Laplacian).
\vskip2pt
\noindent
Since the reaction is asymptotically $(p-1)$-linear at both $\pm\infty$ and $0$, the study of such problems is naturally related to that of the eigenvalue problem for $L_p$. In general, nontrivial solutions appear as soon as the limits above 'jump' over the principal eigenvalue of $L_p$. Existence results can be proved via either variational methods (critical point theory and Morse theory), or topological methods (degree theory).
\vskip2pt
\noindent
In this paper we study the following fractional order nonlinear equation with Dirichlet condition:
\beq\label{dir}
\begin{cases}
\fpl u = f(x,u)& \text{in $\Omega$} \\
u=0 & \text{in $\Omega^c$.}
\end{cases}
\eeq
Here $\Omega\subset\R^N$ ($N\geq2$) is a bounded domain with $C^{1,1}$ boundary, $p \ge 2$, $s\in(0,1)$ s.t.\ $N>ps$, the leading operator is the fractional $p$-Laplacian, defined for all $u:\R^N\to\R$ smooth enough and all $x\in\R^N$ by
\[\fpl u(x)=2\lim_{\eps\to 0^+}\int_{B_\eps^c(x)}\frac{|u(x)-u(y)|^{p-2}(u(x)-u(y))}{|x-y|^{N+ps}}\,dy,\]
and $f:\Omega\times\R\to\R$ is a Carath\'eodory mapping with $(p-1)$-linear growth both at $0$ and at $\pm \infty$, with different slopes (jumping reaction). The operator $\fpl$ is both a nonlocal and a nonlinear one, which for $p=2$ reduces to the well-known fractional Laplacian. The corresponding eigenvalue problem can be stated as follows:
\beq\label{ev}
\begin{cases}
\fpl u = \lambda|u|^{p-2}u & \text{in $\Omega$} \\
u=0 & \text{in $\Omega^c$.}
\end{cases}
\eeq
The eigenvalue problem \eqref{ev} has been studied, for instance, in \cite{IS,LL}, leading to the existence of a diverging sequence of variational (Lusternik-Schnirelmann) eigenvalues
\[0<\lambda_1<\lambda_2\le\ldots\le\lambda_k\le\ldots,\]
with properties analogous to those of the classical $p$-Laplacian. The nonlinear problem \eqref{dir} (or variants of it) was studied in \cite{DQ,DQ2,FI,FRS,ILPS,IL,IMS1}, where asymptotic comparison between $f(x,\cdot)$ and some eigenvalue of the sequence above is often used as a means to the end of proving existence of nontrivial solutions. Most of the cited works use variational methods. In particular, asymmetric reactions ($(p-1)$-superlinear at $\infty$, $(p-1)$-sublinear at $-\infty$) are considered in \cite{IL}.
\vskip2pt
\noindent
Our approach is topological, based on Browder's topological degree for $(S)_+$-maps, and follows \cite{APS1,APS}. We prove multiplicity results for problem \eqref{dir} with jumping reactions, under two different sets of hypotheses:
\begin{itemize}
\item[$(a)$] if the quotient $f(x,t)/(|t|^{p-2}t)$ is asymptotically bounded below $\lambda_1$ for $|t|\to\infty$, and between $\lambda_1$ and $\lambda_2$ for $t\to 0$, then \eqref{dir} has at least two nontrivial solutions (Theorem \ref{jmp});
\item[$(b)$] if the quotient $f(x,t)/(|t|^{p-2}t)$ is asymptotically bounded below $\lambda_1$ for $t\to\infty$, above $\lambda_1$ for both $t\to -\infty,0^+$, and tends to $0$ for $t\to 0^-$, then \eqref{dir} has at least two nontrivial solutions, one of which positive (Theorem \ref{ajmp}).
\end{itemize}
In both cases we do not assume that the limits exist. The proofs are based on a comparison between the operator driving problem \eqref{dir} and the one arising from convenient weighted eigenvalue problems, which preserves Browder's degree by homotopy invariance. In this comparison we use an index formula for $\fpl$ proved in \cite{FRS}, and monotonicity properties of weighted eigenvalues proved in the forthcoming paper \cite{BI}.
\vskip2pt
\noindent
The paper has the following structure: in Section \ref{sec2} we recall the basic notions of the degree theory for demicontinuous $(S)_+$-maps; in Section \ref{sec3} we recall the functional-analytic framework and some well-known results about fractional $p$-Laplacian problems, including weighted eigenvalue problems; in Section \ref{sec4} we deal with case $(a)$; and in Section \ref{sec5} we deal with case $(b)$.
\vskip4pt
\noindent
{\bf Notation:} Throughout the paper, for any $A\subset\R^N$ we shall set $A^c=\R^N\setminus A$. For any two measurable functions $f,g:\Omega\to\R$, $f\le g$ in $\Omega$ will mean that $f(x)\le g(x)$ for a.e.\ $x\in\Omega$ (and similar expressions). The positive (resp., negative) part of $f$ is denoted $f^+$ (resp., $f^-$). If $X$ is an ordered Banach space, then $X_+$ will denote its non-negative order cone. For all $r\in[1,\infty]$, $\|\cdot\|_r$ denotes the standard norm of $L^r(\Omega)$ (or $L^r(\R^N)$, which will be clear from the context). Every function $u$ defined in $\Omega$ will be identified with its $0$-extension to $\R^N$. Moreover, $C$ will denote a positive constant (whose value may change case by case).

\section{Degree theory for $(S)_+$-maps}\label{sec2}

\noindent
Topological degree theory for $(S)_+$-mappings from a Banach space into its dual was introduced by Browder in \cite{B} and subsequent papers, as an infinite-dimensional extension of Brouwer's degree theory, and then generalized in \cite{HP,APS1} to set-valued mappings. We recall here some basic features of such theory, following the general approach of \cite[Section 4.3]{MMP}.
\vskip2pt
\noindent
Let $(X, \|\cdot\|)$ be a separable reflexive Banach space with dual $(X^*, \|\cdot\|_*)$. We say that $A: X \to X^*$ is a $(S)_+$-{\em map}, if for any sequence $(u_n)$ in $X$, $u_n \rightharpoonup u$ in $X$ and
\[\limsup_{n \to \infty} \langle A(u_n), u_n-u\rangle \leq 0\] 
imply $u_n\to u$ (strongly). By Troyanskij's renorming theorem, we can assume that both $X$ and $X^*$ are locally uniformly convex. So, there is a (single-valued) duality map $\mathcal{F}: X \to X^*$ s.t.\ for all $u \in X$
\[\|\mathcal{F}(u)\|^2 = \|u\|^2= \langle \mathcal{F}(u), u\rangle.\] 
Such $\mathcal{F}$ is a $(S)_+$-homeomorphism between $X$ and $X^*$. Also, we remark that if $A:X \to X^*$ is a demicontinuous (i.e., strong to weak$^*$ continuous) $(S)_+$-map and $B:X \to X^*$ is a completely continuous map, then $A+B$ is a demicontinuous $(S)_+$-map.
\vskip2pt
\noindent
We will now define a degree for a triple $(A,U,u^*)$, where $U \subseteq X$ is a bounded open set, $A: \overline{U} \to X^*$ is a demicontinuous $(S)_+$-map, and $u^* \in X^* \setminus A(\partial U)$. First we introduce a Galerkin type approximation. Since $X$ is separable, there exists an increasing sequence $(X_n)$ of finite-dimensional subspaces of $X$ s.t.\
\[\overline{\bigcup_{n=1}^\infty X_n}=X.\]
For all $n \in \N$ we denote $U_n=U \cap X_n$ and define $A_n:\overline{U}_n \to X_n^*$ ($\overline U_n$ denotes the closure of $U_n$ in $X_n$) by setting for all $u\in\overline U_n$, $v\in X_n$
\[\langle A_n(u),v\rangle = \langle A(u),v\rangle_n\]
($\langle\cdot,\cdot\rangle_n$ denotes the duality between $X^*_n$ and $X_n$). By \cite[Proposition 4.38]{MMP}, the Brouwer degree of $A_n$ eventually stabilizes as $n \to \infty$, i.e., there exists $n_0\in\N$ s.t.\ for all $n\ge n_0$ we have $u^*\notin A_n(\partial U_n)$ and
\[\deg_{B}(A_n, U_n, u^*) = \deg_{B}(A_{n_0}, U_{n_0}, u^*).\]
So, we can define the {\em degree} for the triple $(A,U,u^*)$ as 
\[\deg_{(S)_+} (A, U, u^*) = \deg_{B} (A_{n_0}, U_{n_0}, u^*).\]
The integer-valued map $\deg_{(S_+)}$ inherits the main properties of Brouwer's degree. In particular, it is invariant with respect to a special class of homotopies. We say that $h: [0,1] \times \overline{U} \to X^*$ is a $(S)_+$-{\em homotopy}, if $t_n \to t$ in $[0,1]$, $u_n \rightharpoonup u$ in $X$, and
\[\limsup_{n \to \infty} \langle h(t_n,u_n), u_n-u\rangle \leq 0\] 
imply $u_n \to u$ in $X$ and $h(t_n,u_n) \rightharpoonup h(t,u)$ in $X^*$. For instance, if $A, B: \overline{U} \to X^*$ are demicontinuous $(S)_+$-maps, then
\[h(t,u)=(1-t) A(u) + t B(u)\]
defines a $(S)_+$-homotopy \cite[Proposition 4.41]{MMP}. For the reader's convenience, we summarize the properties of $\deg_{(S)_+}$:

\begin{proposition} \label{deg}
{\rm \cite[Theorem 4.42]{MMP}} Let $U \subset X$ be a bounded open set, $A: \overline{U} \to X^*$ be a demicontinuous $(S)_+$-map, $u^* \notin A(\partial U)$. Then:
\begin{enumroman}
\item \label{d1} (normalization) if $u^* \in \mathcal{F}(U)$, then $\deg_{(S)_+} (\mathcal{F}, U, u^*) = 1$;
\item \label{d2} (domain additivity) if $U= U_1 \cup U_2$, with $U_1, U_2 \subset X$ nonempty open sets s.t.\ $U_1 \cap U_2 = \emptyset$ and $u^* \notin A(\partial U_1 \cup \partial U_2)$, then
\[\deg_{(S)_+} (A, U, u^*)= \deg_{(S)_+} (A, U_1, u^*) + \deg_{(S)_+} (A, U_2, u^*);\]
\item \label{d3} (excision) if $C \subset \overline{U}$ is closed s.t.\ $u^* \notin A(C)$, then 
\[\deg_{(S)_+} (A, U \setminus C, u^*)= \deg_{(S)_+} (A, U, u^*);\]
\item \label{d4} (homotopy invariance) if $h: [0,1] \times \overline{U} \to X^*$ is a $(S)_+$-homotopy s.t.\ $u^* \notin h(t,\partial U)$ for all $t \in [0,1]$, then
\[t \mapsto \deg_{(S)_+} (h(t,\cdot), U, u^*)\]
is constant in $[0,1]$;
\item \label{d5} (solution) if $\deg_{(S)_+} (A, U, u^*) \neq 0$, then there exists $u \in U$ s.t.\ $A(u)=u^*$;
\item \label{d6} (boundary dependence) if $B: \overline{U} \to X^*$ is a demicontinuous $(S)_+$-map s.t.\ $A(u)=B(u)$ for all $u \in \partial U$, then
\[\deg_{(S)_+} (A, U, u^*)= \deg_{(S)_+} (B, U, u^*).\]
\end{enumroman}
\end{proposition}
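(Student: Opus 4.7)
The strategy is to reduce each of the six properties to the analogous property of Brouwer's degree applied to the finite-dimensional Galerkin approximations $A_n\colon\overline U_n\to X_n^*$, exploiting the stabilization of $\deg_{B}(A_n,U_n,u^*)$ given by \cite[Proposition 4.38]{MMP}. For \eqref{d1}, \eqref{d2}, \eqref{d3}, and \eqref{d6} the reduction is essentially mechanical: restriction to $X_n$ commutes with the set-theoretic operations, so once one has checked that $u^*\notin A_n(\partial U_n)$ (or the appropriate variant for the claim at hand) holds for every $n\ge n_0$, the finite-dimensional normalization, additivity, and excision all transfer through the stabilization. For \eqref{d1} one further verifies that $\mathcal{F}|_{X_n}\colon X_n\to X_n^*$ is the duality map of $X_n$ with its inherited norm, and hence has Brouwer degree $1$ at any interior value; for \eqref{d6} one combines the straight-line homotopy $(1-t)A+tB$, which is an $(S)_+$-homotopy by \cite[Proposition 4.41]{MMP}, with \eqref{d4}.

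The core of the argument is the homotopy invariance \eqref{d4}. Given an $(S)_+$-homotopy $h$ with $u^*\notin h(t,\partial U)$ for all $t\in[0,1]$, the main task is to establish the uniform Galerkin condition: there exists $n_0$ such that $u^*\notin h_n(t,\partial U_n)$ for every $n\ge n_0$ and every $t\in[0,1]$. Arguing by contradiction, suppose $n_k\to\infty$, $t_{n_k}\to t$, and $u_{n_k}\in\partial U_{n_k}$ satisfy $h_{n_k}(t_{n_k},u_{n_k})=u^*|_{X_{n_k}}$. By boundedness of $U$, extract $u_{n_k}\rightharpoonup u$. A density argument based on $\overline{\bigcup_n X_n}=X$ (pick $v_{n_k}\in X_{n_k}$ with $v_{n_k}\to u$) splits $\langle h(t_{n_k},u_{n_k}),u_{n_k}-u\rangle$ into the term $\langle u^*,u_{n_k}-v_{n_k}\rangle$ (by the very definition of the Galerkin restriction) plus a remainder that vanishes, yielding $\limsup_k\langle h(t_{n_k},u_{n_k}),u_{n_k}-u\rangle\le 0$. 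The $(S)_+$-homotopy property then forces $u_{n_k}\to u\in\partial U$ and $h(t_{n_k},u_{n_k})\rightharpoonup h(t,u)=u^*$, contradicting $u^*\notin h(t,\partial U)$. With uniformity in hand, Brouwer's homotopy invariance on each $X_n$ and stabilization make $t\mapsto\deg_{(S)_+}(h(t,\cdot),U,u^*)$ constant. Property \eqref{d5} follows the same scheme: a nonzero degree provides $u_n\in U_n$ with $A_n(u_n)=u^*|_{X_n}$, a weak limit $u_n\rightharpoonup u$ satisfies $\limsup\langle A(u_n),u_n-u\rangle\le 0$ by the same density argument, the $(S)_+$-property promotes this to $u_n\to u$, and demicontinuity of $A$ gives $A(u)=u^*$.

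The principal obstacle is the uniform-in-$t$ Galerkin stabilization needed in \eqref{d4}: separate stabilization at each fixed $t$ is insufficient to invoke finite-dimensional homotopy invariance, and it is precisely to secure this joint uniformity that one imposes the full $(S)_+$-homotopy condition rather than merely requiring each slice $h(t,\cdot)$ to be an $(S)_+$-map. A secondary but recurring subtlety is the consistent interpretation of the restriction $u^*\mapsto u^*|_{X_n}$ across all six items, which the paper's convention $\langle A_n(u),v\rangle=\langle A(u),v\rangle_n$ fixes once and for all.
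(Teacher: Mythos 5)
The paper does not prove this proposition; it cites it verbatim from \cite[Theorem 4.42]{MMP}, so there is no internal argument to compare against. Your Galerkin-reduction sketch is the standard proof in the literature and aligns exactly with the construction the paper sets up (finite-dimensional approximants $A_n$ and stabilization of $\deg_B$ via \cite[Proposition 4.38]{MMP}); your emphasis on the \emph{uniform-in-$t$} Galerkin stabilization as the crux of \eqref{d4}, and on why the full $(S)_+$-homotopy condition rather than slicewise $(S)_+$-maps is what secures it, is the right reading of the theory. Deriving \eqref{d6} from \eqref{d4} via the segment homotopy and \cite[Proposition 4.41]{MMP} is also the natural route.

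One step is asserted more quickly than it is justified: in the contradiction argument for \eqref{d4} you claim the remainder $\langle h(t_{n_k},u_{n_k}),v_{n_k}-u\rangle$ vanishes because $v_{n_k}\to u$ strongly. This requires $\sup_k\|h(t_{n_k},u_{n_k})\|_*<\infty$, which does not follow from demicontinuity and the $(S)_+$-property alone, and you cannot invoke the conclusion $h(t_{n_k},u_{n_k})\rightharpoonup h(t,u)$ to get it, since that would be circular (you are still in the process of verifying the $\limsup$ hypothesis). The degree in \cite{MMP} is constructed under the standing assumption that the maps are bounded on bounded sets; the same assumption is what makes the stabilization \cite[Proposition 4.38]{MMP}, on which the whole definition rests, go through. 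It is of course satisfied by every operator appearing in the paper ($A$, $N_f$, $K_m$, $K_m^{\pm}$, and their convex combinations), but you should name it explicitly rather than leave it tacit, and the identical point arises in your argument for \eqref{d5}.
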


\noindent
We conclude this section by recalling a result on the degree of a potential operator, originally established by Rabinowitz \cite{R} for the Leray-Schauder degree:

\begin{proposition} \label{degmin}
{\rm \cite[Corollary 4.49]{MMP}} Let $\Phi \in C^1(X)$ be a functional s.t.\ $\Phi': X \to X^*$ is a demicontinuous $(S)_+$-map, $u_0 \in X$ be a local minimizer and an isolated critical point of $\Phi$. Then, there exists $\rho_0>0$ s.t.\ for all $\rho\in (0,\rho_0]$
\[\deg_{(S)_+} (\Phi', B_{\rho}(u_0), 0)=1.\]
\end{proposition}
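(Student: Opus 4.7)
My plan is to establish the degree by homotopy invariance, deforming $\Phi'$ to the duality map $\mathcal{F}$ recentered at $u_0$. After translating so that $u_0=0$ and $\Phi(u_0)=0$, I would consider the affine homotopy
\[
h(t,u) = (1-t)\,\Phi'(u) + t\,\mathcal{F}(u), \qquad (t,u)\in [0,1]\times\overline{B_\rho},
\]
which is an $(S)_+$-homotopy, since both endpoints are demicontinuous $(S)_+$-maps (cf.\ \cite[Proposition 4.41]{MMP}). If the nondegeneracy condition $0\notin h(t,\partial B_\rho)$ can be verified for every $t\in[0,1]$ and every sufficiently small $\rho>0$, then Proposition \ref{deg}\ref{d4} and Proposition \ref{deg}\ref{d1} immediately yield
\[
\deg_{(S)_+}(\Phi',B_\rho,0)=\deg_{(S)_+}(\mathcal{F},B_\rho,0)=1,
\]
which is the claim.

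A preliminary observation is that the hypotheses upgrade $u_0$ to a \emph{strict} local minimum: if every neighborhood of $u_0$ contained a point $u\neq u_0$ with $\Phi(u)=\Phi(u_0)$, such $u$ would itself be a local minimum (by the local-min property of $u_0$), hence a critical point, contradicting isolation. Thus, for some $\rho_0>0$, $\Phi>0$ on $\overline{B_{\rho_0}}\setminus\{0\}$ and $0$ is the sole critical point of $\Phi$ in $\overline{B_{\rho_0}}$. The nondegeneracy of $h$ is then immediate at $t=0$ (uniqueness of the critical point) and at $t=1$ (since $\mathcal{F}^{-1}(0)=\{0\}$).

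The main obstacle is the intermediate range $t\in(0,1)$. If $h(t,u)=0$ with $\|u\|=\rho$, then testing with $u$ and using $\langle\mathcal{F}(u),u\rangle=\|u\|^2$ gives the key relation
\[
(1-t)\,\langle\Phi'(u),u\rangle = -t\,\rho^2,
\]
so the question reduces to excluding $\langle\Phi'(u),u\rangle<0$ on arbitrarily small spheres around $u_0$. This radial monotonicity does \emph{not} follow from the local-minimum property alone, so the argument cannot be closed by the naive homotopy reasoning. To circumvent this, I would exploit the Galerkin-type construction of $\deg_{(S)_+}$ described in Section \ref{sec2}: for each finite-dimensional $X_n$ the restriction $\Phi|_{X_n}$ has a strict isolated local minimum near $0$, and by the classical finite-dimensional result (which, e.g., can be obtained via a Marino--Prodi-type $C^2$-perturbation producing a positive-definite Hessian, a case in which the homotopy argument above manifestly closes by compactness of spheres and strict positivity of $\Phi$ on them), the Brouwer degree of the associated gradient equals $+1$ on $B_\rho\cap X_n$. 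The stabilization property built into the definition of $\deg_{(S)_+}$ then gives the conclusion. This reduction from the $(S)_+$-degree to the finite-dimensional Morse-theoretic fact is the technical heart of the argument and is carried out in detail in \cite[Corollary 4.49]{MMP}.
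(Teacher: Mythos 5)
The paper does not supply a proof of this proposition: it is imported verbatim, with the same reference, from \cite[Corollary 4.49]{MMP}, so there is no ``paper's own argument'' to compare against.

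Your two preliminary observations are sound: the combination of ``local minimizer'' and ``isolated critical point'' does upgrade $u_0$ to a strict local minimum, and the affine homotopy from $\Phi'$ to $\mathcal{F}$ is \emph{not} manifestly admissible, precisely because the relation $(1-t)\langle\Phi'(u),u\rangle=-t\rho^2$ on $\partial B_\rho$ isolates the uncontrolled sign of $\langle\Phi'(u),u\rangle$, which is not implied by the minimum property alone. Identifying this as the real obstacle is the most valuable part of the write-up.

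The Galerkin/Marino--Prodi remedy, however, does not close the argument, and the proposal ends by routing ``the technical heart'' back to \cite[Corollary 4.49]{MMP}, which makes it non-self-contained. More concretely: first, $\Phi$ is only assumed $C^1$, so a Marino--Prodi $C^2$-perturbation producing a positive-definite Hessian is not available without additional structure. Second, and more seriously, the restriction $\Phi|_{X_n}$ can have extra critical points in $B_\rho\cap X_n$ that are not critical points of $\Phi$ (a zero of $A_n$ only requires $\langle\Phi'(u),v\rangle=0$ for $v\in X_n$, not $\Phi'(u)=0$ in $X^*$), and the stabilization in \cite[Proposition 4.38]{MMP} only rules out zeros on $\partial(B_\rho\cap X_n)$, not in the interior; so $\deg_B(A_n,B_\rho\cap X_n,0)$ is an a priori unknown sum of local indices, and the local index $+1$ at $0$ alone does not determine it. A complete proof needs an independent mechanism --- for instance, exploiting that $\Phi'$ being $(S)_+$ forces a local Palais--Smale condition on bounded sets, deducing $\inf_{\partial B_\rho}\Phi>\Phi(u_0)$ via Ekeland's principle, and then comparing $\Phi'$ to the derivative of a modified coercive functional through excision and normalization --- none of which appears in the current sketch.
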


\section{General Dirichlet problems and weighted eigenvalue problems}\label{sec3}

\noindent
In this section we collect some useful results related to the fractional $p$-Laplacian, which we shall exploit in our analysis of problem \eqref{dir}.
\vskip2pt
\noindent
First we fix a functional-analytical framework, following \cite{DNPV, ILPS}. First, for all measurable $u:\R^N\to\R$ we set
\[[u]_{s,p}^p=\iint_{\R^N\times\R^N}\frac{|u(x)-u(y)|^p}{|x-y|^{N+ps}}\,dxdy.\]
Then we define the following fractional Sobolev spaces:
\[W^{s,p}(\R^N)=\big\{u\in L^p(\R^N):\,[u]_{s,p}<\infty\big\},\]
\[\w=\big\{u\in W^{s,p}(\R^N):\,u(x)=0 \ \text{in $\Omega^c$}\big\},\]
the latter being a uniformly convex, separable Banach space with norm $\|u\|=[u]_{s,p}$ and dual space $W^{-s,p'}(\Omega)$ (with norm $\|\cdot\|_{-s,p'})$. Set $p_s^*= Np / (N-ps)$, then the embedding $\w\hookrightarrow L^q(\Omega)$ is continuous for all $q\in[1,p^*_s]$ and compact for all $q\in[1,p^*_s)$.
\vskip2pt
\noindent
On the reaction of \eqref{dir} we make the following general assumption:
\begin{itemize}[leftmargin=1cm]
\item[${\bf H}_0$] $f:\Omega\times\R\to\R$ is a Carath\'{e}odory function, and there exist $c_0>0$, $q\in(1,p^*_s)$ s.t.\ for a.e.\ $x\in\Omega$ and all $t\in\R$
\[|f(x,t)| \leq c_0 (1+|t|^{q-1}).\]
\end{itemize}
Under such hypothesis, the following definition is well posed. We say that $u\in\w$ is a {\em (weak) solution} of \eqref{dir}, if for all $v\in\w$
\[\iint_{\R^N\times\R^N}\frac{|u(x)-u(y)|^{p-2}(u(x)-u(y))(v(x)-v(y))}{|x-y|^{N+ps}}\,dx\,dy = \int_\Omega f(x,u)v\,dx.\]
We have the following a priori estimate for the solutions:

\begin{proposition}\label{bound}
{\rm\cite[Theorem 3.3]{CMS}} Let ${\bf H}_0$ hold, $u \in \w$ be a solution of \eqref{dir}. Then, $u \in L^{\infty}(\Omega)$ with $\|u\|_{\infty} \le C$, for some $C=C(\|u\|)>0$.
\end{proposition}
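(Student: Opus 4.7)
The plan is to establish this $L^\infty$ a priori bound by a De Giorgi--type truncation and iteration, adapted to the nonlocal character of $\fpl$. I first focus on the positive part $u^+$; the bound on $u^-$ follows by applying the same argument to $-u$, which solves an analogous problem whose reaction $-f(x,-t)$ still satisfies ${\bf H}_0$.

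For every $k\geq 0$, the truncation $(u-k)^+$ belongs to $\w$ and is therefore admissible as a test function in the weak formulation of \eqref{dir}. Thanks to the elementary monotonicity inequality
\[
|a-b|^{p-2}(a-b)\bigl((a-k)^+-(b-k)^+\bigr) \;\geq\; \bigl|(a-k)^+-(b-k)^+\bigr|^p,
\]
valid for all $a,b\in\R$ and $k\geq 0$, the left-hand side of the weak formulation dominates $\|(u-k)^+\|^p$. On the right, hypothesis ${\bf H}_0$ yields
\[
\int_\Omega f(x,u)(u-k)^+\,dx \;\leq\; c_0\int_{A_k}\bigl(1+|u|^{q-1}\bigr)(u-k)^+\,dx,
\]
where $A_k=\{x\in\Omega:u(x)>k\}$. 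Next, using the fractional Sobolev embedding $\w\hookrightarrow L^{p^*_s}(\Omega)$ on the left and H\"older's inequality on the right, and exploiting the strict subcriticality $q<p^*_s$, one can produce a recurrence of the form
\[
\|(u-h)^+\|_{p^*_s}^p \;\leq\; \frac{C(\|u\|)}{(h-k)^\sigma}\,\|(u-k)^+\|_{p^*_s}^{p+\delta}
\]
for all $h>k\geq k_0$, with suitable $\sigma,\delta>0$. Choosing $k_0$ large enough that $\|(u-k_0)^+\|_{p^*_s}$ is small (by absolute continuity of $u\in L^{p^*_s}(\Omega)$), a classical De Giorgi--Stampacchia iteration lemma then yields $K=K(\|u\|)$ with $|A_K|=0$, and hence $u\leq K$ a.e.\ in $\Omega$.

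The main technical obstacle is reconciling the nonlocal Gagliardo energy, which measures increments of $u$ across all of $\R^N\times\R^N$, with the local truncation $(u-k)^+$: there is no direct chain rule or Leibniz formula as in the local case. The pointwise inequality stated above is the crucial algebraic device that turns the raw duality product into a genuine Gagliardo seminorm of $(u-k)^+$, and it is precisely the step that makes the nonlocal scheme work. Once that estimate is secured, the strict subcriticality $q<p^*_s$ makes the recurrence super-homogeneous on the right-hand side, and the iteration closes by well-known arguments; the dependence $C=C(\|u\|)$ is inherited from the Sobolev constant and from $\|u\|_{p^*_s}\leq C\|u\|$.
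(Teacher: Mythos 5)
The paper does not prove this proposition; it cites it directly from \cite[Theorem 3.3]{CMS}, so there is no internal proof to compare against. Your proposal goes in the right general direction (truncation plus iteration is indeed how such nonlocal $L^\infty$ bounds are obtained), and the pointwise inequality
\[
|a-b|^{p-2}(a-b)\bigl((a-k)^+-(b-k)^+\bigr)\ge\bigl|(a-k)^+-(b-k)^+\bigr|^p,
\]
which follows from the $1$-Lipschitz monotonicity of $t\mapsto(t-k)^+$, is correct and is exactly the device that replaces the Leibniz/chain rule in the nonlocal setting. The reduction of the $u^-$ bound to the $u^+$ bound via $-u$ is also sound, since $\fpl$ is odd and $t\mapsto -f(x,-t)$ satisfies ${\bf H}_0$ with the same constants.

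However, the iteration step contains a genuine gap. If one carries out the H\"older estimate you gesture at, setting $a_k=\|(u-k)^+\|_{p_s^*}$ and $A_k=\{u>k\}$ and using $|A_h|\le(h-k)^{-p_s^*}a_k^{p_s^*}$, the cleanest recurrence one obtains is of the form
\[
a_h\le\frac{C(\|u\|)}{(h-k)^{\gamma}}\,a_k^{\gamma},\qquad \gamma=\frac{p_s^*-q}{p-1},
\]
and the Stampacchia iteration lemma requires $\gamma>1$, i.e.\ $q<p_s^*-p+1$. Since $p_s^*-p+1<p_s^*$ for $p>1$, the range $q\in[p_s^*-p+1,p_s^*)$ is \emph{not} covered by the naive argument, so strict subcriticality $q<p_s^*$ by itself does not ``make the recurrence super-homogeneous'' as claimed; you would need to check and exhibit the exponents, and they don't close in the upper subcritical range. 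Moreover, your proposed remedy --- choosing $k_0$ large so that $\|(u-k_0)^+\|_{p_s^*}$ is small by absolute continuity of the integral --- does not produce a bound of the form $C=C(\|u\|)$: the rate at which the tail $\int_{\{u>k\}}u^{p_s^*}\,dx$ decays depends on the distribution of $u$, not merely on $\|u\|_{p_s^*}$, so the resulting threshold $k_0$ (and hence the final $L^\infty$ bound) would depend on $u$ itself rather than on $\|u\|$ as the proposition asserts. To cover the full subcritical range with the claimed dependence one needs an additional ingredient, e.g.\ a preliminary Moser-type bootstrap raising the integrability of $u$ before running the De Giorgi scheme, or a more refined interpolation as in the cited reference; as written, the proof does not close.
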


\noindent
Regularity theory for nonlinear, nonlocal operators is still developing. A major role in such theory is played by the following weighted H\"older spaces, with weight $\ds(x)= \mathrm{dist}(x, \Omega^c)^s$. Set
\[C_s^0(\overline{\Omega})= \Big\{u \in C^0(\overline{\Omega}): \frac{u}{\ds} \ \text{has a continuous extension to} \ \overline{\Omega} \Big\},
\quad \|u\|_{0,s}= \Big\|\frac{u}{\ds}\Big\|_{\infty},\]
and for all $\alpha \in (0,1)$
\[C_s^{\alpha}(\overline{\Omega})= \Big\{u \in C^0(\overline{\Omega}): \frac{u}{\ds} \ \text{has a $\alpha$-H\"older extension to} \ \overline{\Omega}\Big\},
\ 
\|u\|_{\alpha,s}= \|u\|_{0,s} + \sup_{x \neq y} \frac{|u(x)/\ds(x) - u(y)/\ds(y)|}{|x-y|^{\alpha}}.\]
The embedding $C_s^{\alpha}(\overline{\Omega}) \hookrightarrow C_s^0(\overline{\Omega})$ is compact for all $\alpha \in (0,1)$. By \cite[Lemma 5.1]{ILPS}, the positive cone $C_s^0(\overline{\Omega})_+$ of $C_s^0(\overline{\Omega})$ has a nonempty interior given by
\[{\rm int}(C_s^0(\overline{\Omega})_+)= \Big\{u \in C_s^0(\overline{\Omega}):\, \frac{u}{\ds} > 0 \text{ in } \overline{\Omega}\Big\}.\]
Combining Proposition \ref{bound} and \cite[Theorem 1.1]{IMS5}, we have the following global regularity result for the degenerate case $p\ge 2$:

\begin{proposition}\label{reg}
Let ${\bf H}_0$ hold, $u \in \w$ be a solution of \eqref{dir}. Then, $u \in C_s^{\alpha}(\overline{\Omega})$  for some $\alpha \in (0,s]$.
\end{proposition}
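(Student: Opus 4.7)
The plan is to chain together the two cited results exactly as the statement suggests, with only a trivial intermediate step. First, I would invoke Proposition \ref{bound} under hypothesis ${\bf H}_0$ to conclude that the weak solution $u \in \w$ satisfies $u \in L^\infty(\Omega)$ with a bound depending only on $\|u\|$; this upgrades $u$ from a Sobolev function to a bounded function on $\Omega$ (and hence on $\R^N$, recalling that $u=0$ on $\Omega^c$ by convention).

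Next I would observe that the right-hand side of \eqref{dir} is bounded: by ${\bf H}_0$ and the Carath\'eodory property,
\[
|f(x,u(x))| \le c_0\bigl(1+|u(x)|^{q-1}\bigr) \le c_0\bigl(1+\|u\|_\infty^{q-1}\bigr) \quad \text{for a.e.\ } x\in\Omega,
\]
so $f(\cdot,u) \in L^\infty(\Omega)$. Thus $u$ is a weak solution in $\w$ of the Dirichlet problem $\fpl u = g$ in $\Omega$, $u=0$ in $\Omega^c$, with a bounded datum $g := f(\cdot,u)$.

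At this point I would apply the global weighted H\"older regularity result of \cite[Theorem 1.1]{IMS5}, which in the degenerate regime $p\ge 2$ asserts precisely that any bounded weak solution of a Dirichlet problem for $\fpl$ with bounded right-hand side (on a $C^{1,1}$ domain) belongs to $C_s^{\alpha}(\overline{\Omega})$ for some $\alpha \in (0,s]$, with a quantitative estimate of $\|u\|_{\alpha,s}$ in terms of $\|u\|_\infty$ and $\|g\|_\infty$. Specializing this to our $g=f(\cdot,u)$ yields $u\in C_s^\alpha(\overline\Omega)$ and completes the proof.

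There is no real obstacle here: the content of the proposition is a bookkeeping combination of Proposition \ref{bound} with the external regularity theorem from \cite{IMS5}. The only point requiring a remark is checking that the hypotheses of \cite[Theorem 1.1]{IMS5} match our setting ($p\ge 2$, $C^{1,1}$ boundary, $N>ps$, bounded right-hand side), all of which are in force under the standing assumptions of the paper together with ${\bf H}_0$.
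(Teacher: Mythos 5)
Your proposal matches the paper's own (unstated but clearly indicated) proof exactly: the paper simply declares Proposition \ref{reg} as a combination of Proposition \ref{bound} with \cite[Theorem 1.1]{IMS5}, and you correctly supply the one intermediate observation that $f(\cdot,u)\in L^\infty(\Omega)$ once $u$ is known to be bounded, which is what makes the regularity theorem from \cite{IMS5} applicable. Nothing more is needed.
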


\noindent
We define the operators driving \eqref{dir}. For all $u,v\in\w$ we set
\[\langle A(u), v\rangle = \iint_{\R^N \times \R^N} \frac{|u(x)-u(y)|^{p-2} (u(x)-u(y)) (v(x)-v(y))}{|x-y|^{N+ps}}\,dxdy.\]
It is easily seen that $A: \w \to W^{-s,p'}(\Omega)$ is a demicontinuous $(S)_+$-map (see \cite[Lemma 2.1]{FI}, \cite[Lemma 3.2]{FRS}). We recall from \cite[Lemma 2.1]{IL} the following inequality, which holds for all $u\in\w$:
\beq\label{pm}
\|u^\pm\|^p \le \langle A(u),\pm u^\pm\rangle.
\eeq
We also define the Nemytskij operator
\[\langle N_f(u),v \rangle = \int_{\Omega} f(x,u)v\,dx.\]
By ${\bf H}_0$, $N_f: \w \to W^{-s,p'}(\Omega)$ is a completely continuous map. Thus, $A-N_f$ is a demicontinuous $(S)_+$-map. Clearly, any $u \in \w$ is a (weak) solution iff in $W^{-s,p'}(\Omega)$ we have
\[A(u)-N_f(u) = 0.\]
Sometimes we will deal with problem \eqref{dir} variationally. Let us define an energy functional by setting for all $(x,t)\in\Omega\times\R$
\[F(x,t)=\int_0^t f(x,\tau)\,d\tau,\]
and for all $u\in \w$
\[\Phi(u)= \frac{\|u\|^p}{p} -  \int_{\Omega} F(x,u)\,dx.\]
By ${\bf H}_0$, it is easily seen that $\Phi \in C^1(\w)$ with derivative given for all $u\in\w$ by
\[\Phi'(u)=A(u)-N_f(u),\]
so the solutions of \eqref{dir} coincide with the critical points of $\Phi$. By using Proposition \ref{reg} above, we get the following useful result about equivalence of local minimizers of $\Phi$ in the topologies of $\w$ and $C_s^0(\overline{\Omega})$, respectively:

\begin{proposition}\label{svh}
{\rm \cite[Theorem 1.1]{IMS1}} Let ${\bf H}_0$ hold, $u\in\w$. Then, the following are equivalent:
\begin{enumroman}
\item\label{svh1} there exists $\sigma>0$ s.t.\ $\Phi(u+v)\ge\Phi(u)$ for all $v\in\w\cap C_s^0(\overline\Omega)$, $\|v\|_{0,s}\le\sigma$;
\item\label{svh2} there exists $\rho>0$ s.t.\ $\Phi(u+v)\ge\Phi(u)$ for all $v\in\w$, $\|v\| \le\rho$.
\end{enumroman}
\end{proposition}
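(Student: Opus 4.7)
I would prove the nontrivial direction \ref{svh1}$\Rightarrow$\ref{svh2} by contradiction, based on constrained minimization combined with the $C_s^\alpha$-regularity of Proposition \ref{reg}; the reverse implication \ref{svh2}$\Rightarrow$\ref{svh1} follows from a parallel (essentially symmetric) argument.

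Suppose \ref{svh1} holds but $u$ is \emph{not} a $\w$-local minimizer of $\Phi$. Then for every small $\rho>0$ we can find $w\in\overline{B}_\rho(u):=\{z\in\w:\|z-u\|\le\rho\}$ with $\Phi(w)<\Phi(u)$. Since $\overline{B}_\rho(u)$ is bounded, closed and convex in the reflexive space $\w$, and $\Phi$ is sequentially weakly lower semicontinuous on $\w$ (via the compact embedding $\w\hookrightarrow L^q(\Omega)$ for $q<p^*_s$ together with ${\bf H}_0$), the infimum
\[m_\rho=\inf_{w\in\overline{B}_\rho(u)}\Phi(w)\]
is attained at some $u_\rho\in\overline{B}_\rho(u)$ with $\Phi(u_\rho)<\Phi(u)$; in particular $u_\rho\neq u$ and $\|u_\rho-u\|\to 0$ as $\rho\to 0^+$. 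A Lagrange multiplier argument applied to the convex $C^1$-constraint $w\mapsto\|w-u\|^p\le\rho^p$ (Slater's condition holds since $u$ is an interior point) yields $\mu_\rho\ge 0$ such that
\[A(u_\rho)+\mu_\rho A(u_\rho-u)=N_f(u_\rho)\quad\text{in $W^{-s,p'}(\Omega)$,}\]
with $\mu_\rho=0$ whenever $u_\rho$ lies in the interior of the ball.

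The crucial step is a uniform $C_s^\alpha$-bound for the family $(u_\rho)$. Since $\|u_\rho\|\le\|u\|+\rho$ is uniformly bounded, a uniform $L^\infty$-bound follows from Proposition \ref{bound} adapted to the perturbed equation; one first controls $\mu_\rho$ by testing the Euler equation against $u_\rho-u$ and bounding $\mu_\rho\|u_\rho-u\|^p$ in terms of the energy gap $\Phi(u)-\Phi(u_\rho)$ via ${\bf H}_0$. The boundary regularity theorem underlying Proposition \ref{reg} then delivers $\|u_\rho\|_{\alpha,s}\le C$ for some $\alpha\in(0,s]$, uniformly in $\rho$: the extra term $\mu_\rho A(u_\rho-u)$ is treated as a small correction to the right-hand side (using that $u_\rho-u\to 0$ in $\w$ and that $\mu_\rho$ remains bounded), or alternatively handled via a direct $C_s^\alpha$-estimate tailored to the variational inequality solved by $u_\rho$.

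By the compactness of $C_s^\alpha(\overline\Omega)\hookrightarrow C_s^0(\overline\Omega)$, along a subsequence $u_\rho\to v$ in $C_s^0(\overline\Omega)$; since $u_\rho\to u$ already in $\w$, the limit must satisfy $v=u$, so $\|u_\rho-u\|_{0,s}\to 0$. For $\rho$ small enough the perturbation $v_\rho:=u_\rho-u$ thus lies in $\w\cap C_s^0(\overline\Omega)$ with $\|v_\rho\|_{0,s}\le\sigma$, so \ref{svh1} forces $\Phi(u_\rho)\ge\Phi(u)$, contradicting $\Phi(u_\rho)<\Phi(u)$. The hardest part of the argument is precisely the uniform $C_s^\alpha$-regularity of $u_\rho$ in the presence of the Lagrange term $\mu_\rho A(u_\rho-u)$: since the operator $A+\mu_\rho A(\,\cdot\,-u)$ is not itself a fractional $p$-Laplacian, the known boundary regularity results must be applied with care, either by exhibiting the multiplier contribution as a negligible perturbation as $\rho\to 0^+$ or by re-running the regularity proof directly on the constrained minimizer $u_\rho$.
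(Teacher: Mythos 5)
Your overall scheme for the implication (i)\,$\Rightarrow$\,(ii) — constrained minimization of $\Phi$ on shrinking $\w$-balls around $u$, Euler--Lagrange with a nonnegative multiplier, a uniform $C^\alpha_s$-bound for the constrained minimizers, and compactness of $C^\alpha_s(\overline\Omega)\hookrightarrow C^0_s(\overline\Omega)$ — is indeed the Brezis--Nirenberg/Garc\'ia Azorero--Peral--Manfredi scheme that underlies the cited proof in \cite{IMS1}; the present paper does not reprove the result, it simply invokes that reference, so you have reconstructed the correct route.

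There are, however, two concrete problems with the way you carry it out. First, your proposed control of the multiplier does not work. Testing the Euler equation against $u_\rho-u$ gives $\mu_\rho\|u_\rho-u\|^p=-\langle\Phi'(u_\rho),u_\rho-u\rangle$, and when the constraint is active $\|u_\rho-u\|=\rho$, so the naive bound is $\mu_\rho\le\|\Phi'(u_\rho)\|_{-s,p'}\rho^{1-p}$, which \emph{diverges} as $\rho\to 0^+$ for $p>1$ (and the energy gap, which is at best $O(\rho)$, does not repair this). In fact no a priori bound on $\mu_\rho$ is available or needed: the mechanism actually used in \cite{IMS1} (as in the classical local case) is to exploit the \emph{sign} $\mu_\rho\ge 0$ and the monotonicity of $A$ to \emph{discard} the term $\mu_\rho A(u_\rho-u)$ when testing with the truncations $(u_\rho-u-k)^\pm$ that drive the De Giorgi iteration for the $L^\infty$ and $C^\alpha_s$ estimates, since $\langle A(u_\rho-u),(u_\rho-u-k)^+\rangle\ge 0$. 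Treating $\mu_\rho A(u_\rho-u)$ as a ``small correction to the right-hand side'' is therefore not viable; it is not small, but it is harmless because it has the right sign to be thrown away. Absorbing this correctly is the technical core of the cited theorem, and your sketch skips it.

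Second, the dismissal of (ii)\,$\Rightarrow$\,(i) as ``parallel, essentially symmetric'' is unjustified here. In the local $W^{1,p}_0$ versus $C^1_0$ setting that implication is trivial, because $\|v\|_{W^{1,p}_0}\le C\|v\|_{C^1_0}$ makes small $C^1$-balls sit inside small $W^{1,p}_0$-balls. In the present fractional setting, $\|v\|_{0,s}$ controls only a weighted $L^\infty$ norm and gives no control on the Gagliardo seminorm $[v]_{s,p}$: oscillatory perturbations $v$ with $\|v\|_{0,s}$ small but $\|v\|$ arbitrarily large are easily constructed. Thus neither topology on $\w\cap C^0_s(\overline\Omega)$ refines the other, the set inclusion argument is unavailable, and the reverse implication requires its own argument rather than an appeal to symmetry.
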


\noindent
Finally, we recall a strong maximum principle and Hopf's lemma:

\begin{proposition}\label{max}
{\rm\cite[Theorems 1.2, 1.5]{DQ1}} Let ${\bf H}_0$ hold, and $c_1>0$ be s.t.\ for a.e.\ $x \in \Omega$ and all $t \geq 0$
\[f(x,t)\geq -c_1 t^{p-1}.\]
Then, for all $u \in \w_+ \setminus \{0\}$ solution of \eqref{dir} we have $u \in \mathrm{int}(C_s^0(\overline{\Omega})_+)$.
\end{proposition}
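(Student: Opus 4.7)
My plan is to combine a nonlocal interior strong maximum principle with a Hopf-type boundary estimate and then appeal to the explicit description of $\mathrm{int}(C_s^0(\overline{\Omega})_+)$ recalled before the statement. Setting $g(x) = f(x,u(x)) + c_1 u(x)^{p-1}$, the hypothesis gives $g \ge 0$ a.e.\ in $\Omega$, so $u \ge 0$ is a weak supersolution of $\fpl v + c_1 v^{p-1} = 0$ in $\Omega$; by Proposition \ref{reg}, in addition $u \in C_s^{\alpha}(\overline{\Omega})$, which legitimizes pointwise manipulations of $\fpl u$.

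For the interior step I would argue by contradiction. Suppose $u(x_0) = 0$ for some $x_0 \in \Omega$. Since $u \ge 0$ with $u \not\equiv 0$ and $u = 0$ outside $\Omega$, the pointwise formula for $\fpl$ at $x_0$ reduces to
\[\fpl u(x_0) = -2 \int_{\R^N} \frac{u(y)^{p-1}}{|x_0-y|^{N+ps}}\, dy < 0,\]
while the supersolution relation at $x_0$ forces $\fpl u(x_0) \ge -c_1 u(x_0)^{p-1} = 0$, a contradiction. Hence $u > 0$ in $\Omega$.

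For the boundary step I would compare $u$ with $\eps w$, where $w\in\w$ is the fractional $p$-torsion function solving $\fpl w = 1$ in $\Omega$, $w=0$ in $\Omega^c$; standard barrier estimates (valid on $C^{1,1}$ domains) give $w \ge c_0 \ds$ in $\Omega$. Since $u$ is strictly positive and continuous on compact subsets of $\Omega$ by the previous step, I would fix a thin boundary strip $\Omega_\delta = \{x \in \Omega:\,\mathrm{dist}(x,\Omega^c)<\delta\}$ and $\eps>0$ small so that $u \ge \eps w$ on $\Omega \setminus \Omega_\delta$, then use a weak comparison principle for $\fpl v + c_1 v^{p-1}$ on $\Omega_\delta$ (where the lower-order term is $O(\delta^{s(p-1)})$ and hence absorbable into the torsion source for $\delta$ small) to propagate $u \ge \eps w \ge \eps c_0 \ds$ throughout $\Omega$. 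Then $u/\ds$ extends to a strictly positive continuous function on $\overline{\Omega}$, placing $u$ in $\mathrm{int}(C_s^0(\overline{\Omega})_+)$.

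The hard part is the boundary step: weak comparison principles for $\fpl$ perturbed by a zero-order term of the ``wrong'' sign ($-c_1 v^{p-1}$) are delicate, since the standard convexity-based subtraction trick for $\fpl$ does not directly absorb such a term. This is precisely the technical core of \cite[Theorems 1.2, 1.5]{DQ1}, which we are free to invoke; alternatively one bypasses the issue by choosing the strip width $\delta$ so that the perturbation is strictly dominated by the torsion source on $\Omega_\delta$, and then closes the argument via an elementary comparison for the unperturbed operator.
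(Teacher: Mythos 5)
The paper offers no internal proof of Proposition \ref{max}: it is recalled verbatim from \cite[Theorems 1.2, 1.5]{DQ1} (strong minimum principle and Hopf lemma for the fractional $p$-Laplacian with a zero-order term), combined with the characterization of $\mathrm{int}(C_s^0(\overline\Omega)_+)$ from \cite[Lemma 5.1]{ILPS} quoted just above it. So I can only assess your sketch on its own terms, and there is a genuine gap in the interior step.

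You argue by evaluating $\fpl u(x_0)$ pointwise at a hypothetical interior zero $x_0$ and concluding it is strictly negative. This presupposes two things that are not available without substantial further work: (i) that $\fpl u(x_0)$ is actually a finite number. At a zero of a nonnegative function, the integrand is $\sim u(y)^{p-1}/|x_0-y|^{N+ps}$ near $x_0$, which is integrable only if $u(y) = O(|y-x_0|^\beta)$ for some $\beta > ps/(p-1)$; the regularity $u\in C_s^\alpha(\overline\Omega)$ with $\alpha\le s$ from Proposition \ref{reg} gives nothing of the sort, and even interior $C^{1,\gamma}_{\rm loc}$ regularity only suffices in a restricted range of $(s,p,\gamma)$. (ii) That the weak supersolution inequality $\fpl u \ge -c_1 u^{p-1}$ can be read pointwise at $x_0$. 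Neither is free, and \cite{DQ1} deliberately avoids pointwise evaluation, proving the strong minimum principle at the level of the weak formulation via a nonlocal logarithmic estimate (of Di Castro--Kuusi--Palatucci type). Your boundary step correctly identifies the other technical core — a weak comparison principle in the presence of the sign-unfavorable zero-order term $-c_1 t^{p-1}$ — and is acceptable as an outline precisely because you defer it to \cite{DQ1}; but the interior step as written is not a proof, it is a heuristic that the reference is designed to make rigorous by other means.
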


\noindent
The rest of this section is devoted to the following weighted eigenvalue problem with $m \in L^{\infty}(\Omega)_+ \setminus \{0\}$ and $\lambda \in \R$:
\beq\label{evm}
\begin{cases}
\fpl u = \lambda m(x) |u|^{p-2}u & \text{in $\Omega$} \\
u=0 & \text{in $\Omega^c$.}
\end{cases}
\eeq
This reduces to \eqref{ev} for $m\equiv 1$, and it falls into the general problem studied in \cite{HS} and in the forthcoming paper \cite{BI} (with possibly singular weights). Set for all $u,v \in \w$
\[\langle K_m(u), v\rangle = \int_{\Omega} m(x) |u|^{p-2} u v\,dx.\]
By \cite[Lemma 3.2]{FRS}, $K_m:\w \to W^{-s,p'}(\Omega)$ is a completely continuous map. So, $A-\lambda K_m$ is a demicontinuous $(S)_+$-map for all $\lambda \in \R$. According to the general definition above, we say that $u \in \w$ is a (weak) solution of \eqref{evm} if in $W^{-s,p'}(\Omega)$ we have
\[A(u)-\lambda K_m(u)=0.\]
So, $\lambda \in \R$ is an {\em eigenvalue} of $\fpl$, with weight $m$, if there exists $u \in \w\setminus \{0\}$ solution of \eqref{evm}, which is then an {\em eigenfunction} associated to $\lambda$. Arguing as in \cite{IS} and following the general scheme of \cite{PAO}, we set
\[\mathcal{S}_p(m) = \Big\{u\in\w:\,\int_\Omega m(x)|u|^p\,dx=1\Big\}.\]
For all $k \in \N$ we set
\[\mathcal{F}_k=\{S \subseteq\mathcal{S}_p(m): S=-S \text{ closed, } i(S) \geq k\},\]
where $i(\cdot)$ denotes the Fadell-Rabinowitz cohomological index, and 
\beq \label{lk}
\lambda_k(m)= \inf_{S \subset \mathcal{F}_k} \sup_{u \in S} \|u\|^p.
\eeq
By means of \eqref{lk} we define a sequence of variational (Lusternik-Schnirelmann) weighted eigenvalues
\[0<\lambda_1(m) < \lambda_2(m) \leq \cdots \leq \lambda_k(m) \leq \cdots \to \infty.\]
If $m\equiv 1$, then we set $\lambda_k=\lambda_k(1)$ (eigenvalues of \eqref{ev}). The following proposition, proved in \cite[Subsection 3.1]{HS} and \cite[Proposition 3.4]{FRS}, summarizes the properties of the principal weighted eigenvalue $\lambda_1(m)$, including a strong monotonicity property with respect to $m$ (see also \cite{DQ}):

\begin{proposition}\label{ev1}
Let $m\in L^\infty(\Omega)_+\setminus\{0\}$. Then, $\lambda_1(m)>0$ is the smallest eigenvalue of $\fpl$ with weight $m$ and has the following variational characterization:
\[\lambda_1(m) = \inf_{u \in \w \setminus \{0\}} \frac{\|u\|^p}{\int_{\Omega} m(x) |u|^p\,dx},\]
the infimum being attained at a unique eigenfunction $\hat u_{1,m}\in\mathcal{S}_p(m)\cap{\rm int}(C^0_s(\overline\Omega)_+)$. Besides, any eigenfunction associated to an eigenvalue $\lambda>\lambda_1(m)$ is nodal. Finally, if $m'\in L^\infty(\Omega)_+\setminus\{0\}$ is s.t.\ $m\le m'$ in $\Omega$ and $m\not\equiv m'$, then $\lambda_1(m)>\lambda_1(m')$.
\end{proposition}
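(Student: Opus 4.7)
The plan is to follow the Lusternik-Schnirelmann spectral construction adapted to the nonlocal setting and proceed through four steps, parallel to the four claims of the statement. First, for the variational characterization, from \eqref{lk} with $k=1$ every antipodal pair $\{u,-u\} \subset \mathcal{S}_p(m)$ belongs to $\mathcal{F}_1$ (Fadell-Rabinowitz index $1$), so $\lambda_1(m) \le \|u\|^p$; infimizing over the constraint produces the stated Rayleigh quotient, and positivity follows from $\w \hookrightarrow L^p(\Omega)$ together with $m \in L^\infty(\Omega)$. Second, for attainment, pick a minimizing sequence $(u_n) \subset \mathcal{S}_p(m)$: it is bounded in $\w$, and the compact embedding $\w \hookrightarrow L^p(\Omega)$ (recall $p<p_s^*$), combined with $m \in L^\infty(\Omega)$, passes the constraint to a weak limit $\hat u_{1,m}$, which by weak lower semicontinuity of $\|\cdot\|$ realizes the infimum. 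Since the Gagliardo seminorm does not increase under $u \mapsto |u|$ while the $m$-weighted $L^p$-integral is unchanged, one may assume $\hat u_{1,m} \ge 0$; then Propositions \ref{reg} and \ref{max}, applied with reaction $\lambda_1(m) m(x) t^{p-1}$, give $\hat u_{1,m} \in \mathrm{int}(C_s^0(\overline\Omega)_+)$.

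Third, uniqueness of $\hat u_{1,m}$ and nodality of higher eigenfunctions both flow from a discrete Picone inequality for $\fpl$, available in the weighted form in \cite{BI}: for $v \in \w_+$ and $w \in \mathrm{int}(C_s^0(\overline\Omega)_+)$ one has $\langle A(w), v^p/w^{p-1}\rangle \le \|v\|^p$, with equality if and only if $v$ is proportional to $w$. Testing the weak formulation of \eqref{evm} for $\hat u_{1,m}$ against $\tilde u^p/\hat u_{1,m}^{p-1}$, any second normalized nonnegative minimizer $\tilde u$ forces the equality case, whence $\tilde u = \hat u_{1,m}$ by the constraint. Likewise, if $u$ were an eigenfunction for some $\lambda > \lambda_1(m)$ of constant sign, Proposition \ref{max} would place $|u|$ in $\mathrm{int}(C_s^0(\overline\Omega)_+)$ as a positive eigenfunction at the value $\lambda$, and Picone equality would force $\lambda = \lambda_1(m)$, a contradiction; hence $u$ must change sign.

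Fourth, for the strict monotonicity, insert $\hat u_{1,m}$ into the Rayleigh characterization of $\lambda_1(m')$:
\[\lambda_1(m') \le \frac{\|\hat u_{1,m}\|^p}{\int_\Omega m'(x) |\hat u_{1,m}|^p\,dx}.\]
Since $\hat u_{1,m}/\ds>0$ on $\overline\Omega$ implies $\hat u_{1,m}>0$ pointwise in $\Omega$, and $m'-m \ge 0$ is not a.e.\ zero, the denominator strictly exceeds $\int_\Omega m |\hat u_{1,m}|^p\,dx$, yielding $\lambda_1(m') < \lambda_1(m)$.

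The main obstacle is the Picone step: every other argument is a fairly standard variational or regularity adaptation, but the discrete fractional Picone inequality with its equality case, on which both the uniqueness of $\hat u_{1,m}$ and the sign-changing character of higher eigenfunctions rest, is a genuinely nonlocal tool and the technical heart of the proof.
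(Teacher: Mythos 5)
This proposition is not proved in the paper at all: the authors cite \cite[Subsection 3.1]{HS} and \cite[Proposition 3.4]{FRS} for the eigenvalue properties, with \cite{BI} and \cite{DQ} backing the strict monotonicity. So the comparison here is between your reconstruction and the standard literature arguments rather than the paper's own proof.

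Your plan is essentially the right one and tracks what those references do: variational characterization via \eqref{lk} with $k=1$, attainment by the direct method and compactness of $\w\hookrightarrow L^p(\Omega)$, passage to $|u|$, regularity and the Hopf-type maximum principle (Propositions \ref{reg}, \ref{max}) to get $\hat u_{1,m}\in\mathrm{int}(C_s^0(\overline\Omega)_+)$, Picone's inequality for both uniqueness and nodality of non-principal eigenfunctions, and Rayleigh comparison for strict monotonicity. Two points deserve correction, though. First, the discrete fractional Picone inequality you invoke is \cite[Proposition 4.2]{BF}, not \cite{BI}; the latter is about monotonicity of the weighted eigenvalues and is not where the Picone tool lives. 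Second, and more substantively, the step where you ``test against $\tilde u^p/\hat u_{1,m}^{p-1}$'' requires an argument that this quotient actually belongs to $\w$: since $\hat u_{1,m}$ vanishes (like $\ds$) on $\partial\Omega$, the Gagliardo seminorm of $\tilde u^p/\hat u_{1,m}^{p-1}$ is not automatically finite. The correct handling is exactly the regularization the paper performs in Lemma \ref{amp}: replace the denominator by $u_n=\hat u_{1,m}+1/n$, show $v_n=\tilde u^p/u_n^{p-1}\in\w$ by a two-term Lagrange estimate, use Picone with $u_n$, and pass to the limit via Fatou/dominated convergence. You identify the Picone step as ``the technical heart'' but do not resolve this admissibility issue, which is the genuinely delicate part; without it the uniqueness and nodality arguments are incomplete. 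The remaining parts (positivity of $\lambda_1(m)$, attainment, the monotonicity via $\hat u_{1,m}>0$ in $\Omega$ and $m'\not\equiv m$) are sound as written.
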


\noindent
Regarding the second eigenvalue $\lambda_2(m)$, from \cite[Theorem 3.7]{HS} and \cite{BI} we have the following properties, including a weaker monotonicity property (analogous to \cite[Proposition 3]{AT} for the $p$-Laplacian):

\begin{proposition}\label{ev2}
Let $m\in L^\infty(\Omega)_+\setminus\{0\}$. Then, $\lambda_2(m)$ is the smallest eigenvalue of $\fpl$ with weight $m$, greater than $\lambda_1(m)$. Besides, if $m'\in L^\infty(\Omega)_+\setminus\{0\}$ is s.t.\ $m<m'$ in $\Omega$, then $\lambda_2(m)>\lambda_2(m')$.
\end{proposition}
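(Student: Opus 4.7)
The plan is to treat the two claims separately. The first (that $\lambda_2(m)$ is the smallest eigenvalue strictly above $\lambda_1(m)$) concerns the spectrum at a fixed weight; the second (strict monotonicity in $m$) is a comparison between two weights that I would carry out via a natural odd rescaling between the constraint spheres $\mathcal{S}_p(m)$ and $\mathcal{S}_p(m')$.

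For the first claim I would argue by contradiction. Suppose $\mu$ is an eigenvalue with $\lambda_1(m)<\mu<\lambda_2(m)$ and let $u_\mu\in\w$ be a corresponding eigenfunction, normalized so that $u_\mu\in\mathcal{S}_p(m)$ and $\|u_\mu\|^p=\mu$. By Proposition \ref{ev1}, $u_\mu$ is nodal, so $u_\mu^+,u_\mu^-\neq 0$; testing $A(u_\mu)=\mu K_m(u_\mu)$ against $\pm u_\mu^\pm$ and using \eqref{pm} gives $\|u_\mu^\pm\|^p\leq\mu\int_\Omega m|u_\mu^\pm|^p$, so the normalized parts $v^\pm=u_\mu^\pm/(\int_\Omega m|u_\mu^\pm|^p)^{1/p}\in\mathcal{S}_p(m)$ satisfy $\|v^\pm\|^p\leq\mu$. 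Exploiting the uniqueness of the principal eigenfunction from Proposition \ref{ev1}, I would then join $\hat u_{1,m}$ to $v^+$ and $-\hat u_{1,m}$ to $-v^-$ by continuous paths on the nonnegative (resp.\ nonpositive) cone of $\mathcal{S}_p(m)$ along which $\|\cdot\|^p\leq\mu$ (the standard Lusternik--Schnirelmann deformation along the negative-gradient flow of $\|\cdot\|^p$ restricted to the sphere), and close up through $u_\mu$ to produce a closed, antipodally symmetric subset $\Gamma\subset\mathcal{S}_p(m)$ with $i(\Gamma)\geq 2$ and $\sup_\Gamma\|\cdot\|^p\leq\mu$, contradicting \eqref{lk} for $k=2$.

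For the monotonicity, I would consider the odd continuous homeomorphism $\Psi:\mathcal{S}_p(m')\to\mathcal{S}_p(m)$ defined by $\Psi(u)=u/(\int_\Omega m|u|^p)^{1/p}$. Since $m<m'$ pointwise in $\Omega$, the normalization factor satisfies $\int_\Omega m|u|^p<1$ for every $u\in\mathcal{S}_p(m')$, so $\|\Psi(u)\|^p>\|u\|^p$. Being odd and continuous, $\Psi$ preserves the Fadell--Rabinowitz index, hence maps $\mathcal{F}_2$ for weight $m'$ into $\mathcal{F}_2$ for weight $m$, which already yields $\lambda_2(m)\geq\lambda_2(m')$. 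The main obstacle, and the nontrivial content of \cite{BI}, is upgrading this to strict inequality: unlike the principal case, where $\hat u_{1,m}\in\mathrm{int}(C^0_s(\overline\Omega)_+)$ by Proposition \ref{ev1} makes the strict gap automatic, second eigenfunctions are nodal and could a priori vanish on sets of positive measure, allowing the gap $\int_\Omega(m'-m)|u|^p$ to degenerate in a limit. My plan would be to argue by contradiction: if $\lambda_2(m)=\lambda_2(m')$, I would extract an almost-minimizing family $S'_n\in\mathcal{F}_2$ for $\lambda_2(m')$, use compactness of $K_{m'}$ together with the regularity of Proposition \ref{reg} to pass to a limiting second eigenfunction $u^*$ for $m'$ satisfying $\int_\Omega(m'-m)|u^*|^p=0$, and then invoke the non-vanishing property of nodal eigenfunctions from \cite{BI} as a black box to reach a contradiction, rather than reprove that sharp estimate here.
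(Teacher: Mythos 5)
The paper itself offers no proof of this proposition: it is imported verbatim from \cite[Theorem 3.7]{HS} (for the ``smallest eigenvalue above $\lambda_1(m)$'' characterization) and from the forthcoming \cite{BI} (for the strict monotonicity), so there is no internal argument to compare you against. Read on its own merits, your sketch has the right architecture but two gaps that matter specifically in the nonlocal setting.

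For the first claim, your testing step (using \eqref{pm} to get $\|u_\mu^\pm\|^p\le\mu\int_\Omega m|u_\mu^\pm|^p$) is correct, but the ``close up through $u_\mu$'' step is where the fractional case genuinely departs from the local one. In the $p$-Laplacian case one connects $v^+$ to $-v^-$ by normalizing $a\,u_\mu^+-b\,u_\mu^-$, using that disjointly supported functions decouple: $\|a u_\mu^+-b u_\mu^-\|^p=a^p\|u_\mu^+\|^p+b^p\|u_\mu^-\|^p$. For the Gagliardo seminorm this is false (there are cross interactions between $\{u_\mu>0\}$ and $\{u_\mu<0\}$; indeed \eqref{pm} gives $\|u^+\|^p+\|u^-\|^p\le\|u\|^p$ with strict inequality in general), so the energy along that path is not controlled by $\mu$ for free. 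Handling this is precisely the technical core of the fractional second-eigenvalue results in \cite{HS} (and Brasco--Parini before it), and your sketch passes over it. Also, the claim that ``the standard Lusternik--Schnirelmann deformation along the negative-gradient flow'' yields a path from $v^+$ to $\hat u_{1,m}$ that stays in the nonnegative cone with energy $\le\mu$ is not justified; the gradient flow on $\mathcal{S}_p(m)$ does not visibly preserve $\w_+$. The correct elementary device is the hidden convexity of \cite{BF}: the path $\sigma\mapsto\big((1-\sigma)(v^+)^p+\sigma\hat u_{1,m}^p\big)^{1/p}$ stays on $\mathcal{S}_p(m)$, stays in the cone, and has energy $\le\max\{\|v^+\|^p,\|\hat u_{1,m}\|^p\}\le\mu$; you should replace the flow argument by this.

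For the monotonicity, your rescaling $\Psi:\mathcal{S}_p(m')\to\mathcal{S}_p(m)$ is an odd homeomorphism, it preserves the Fadell--Rabinowitz index, and $\|\Psi(u')\|^p>\|u'\|^p$ pointwise, which does give $\lambda_2(m)\ge\lambda_2(m')$; this part is clean. For the strict inequality you rightly observe that the pointwise strictness does not pass to the inf-sup (minimizing sets need not be compact, and the contact set of the inf-sup is a set, not a point), and you defer the decisive non-vanishing estimate to \cite{BI} as a black box. Since the paper does exactly the same, this is acceptable, though the intermediate ``extract a limiting second eigenfunction $u^*$ with $\int_\Omega(m'-m)|u^*|^p=0$'' step would itself need a compactness argument for minimizing families of index-$\ge 2$ sets that you have not supplied.
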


\noindent
For the demicontinuous $(S)_+$-map $A-\lambda K_m$ there holds the following index formula (see \cite[Lemma 2, Theorem 2]{APS} for the Neumann $p$-Laplacian):

\begin{proposition}\label{ind}
{\rm \cite[Theorem 3.5]{FRS}} Let $m \in L^{\infty}(\Omega)_+ \setminus \{0\}$, $r>0$. Then
\begin{enumroman}
\item \label{ind1} $\deg_{(S)_+} (A-\lambda K_m, B_r(0),0) = 1$ for all $\lambda \in (0, \lambda_1(m))$;
\item \label{ind2} $\deg_{(S)_+} (A-\lambda K_m, B_r(0),0) = -1$ for all $\lambda \in (\lambda_1(m), \lambda_2(m))$.
\end{enumroman}
\end{proposition}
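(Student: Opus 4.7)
The plan is to treat (i) and (ii) separately, in both cases using homotopy invariance of the $(S)_+$-degree together with the variational features of the weighted eigenvalues. Observe first that in both regimes $u=0$ is the only solution of $A(u)=\lambda K_m(u)$ (by Propositions \ref{ev1}--\ref{ev2}), so by excision (Proposition \ref{deg}\ref{d3}) the degree is independent of $r>0$ and I may choose $r$ as convenient.

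For (i), I would use the affine homotopy $h(t,u)=A(u)-t\lambda K_m(u)$, $t\in[0,1]$. Since $A$ is a demicontinuous $(S)_+$-map and $K_m$ is completely continuous, $h$ is an $(S)_+$-homotopy. To rule out $0\in h(t,\partial B_r(0))$ I pair $h(t,u)=0$ with $u$ and invoke the Rayleigh quotient for $\lambda_1(m)$ in Proposition \ref{ev1}:
\[\|u\|^p=t\lambda\int_\Omega m|u|^p\,dx\le\frac{t\lambda}{\lambda_1(m)}\|u\|^p,\]
which contradicts $t\lambda<\lambda_1(m)$ unless $u=0$. By Proposition \ref{deg}\ref{d4} the degree thus equals $\deg_{(S)_+}(A,B_r(0),0)$, and since $A$ is the derivative of the strictly convex functional $u\mapsto\|u\|^p/p$ with unique global minimiser $0$, Proposition \ref{degmin} yields this last degree equal to $1$.

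For (ii) the affine homotopy is inadmissible because it crosses $\lambda_1(m)$. My plan is to exploit the potential structure $A-\lambda K_m=\Psi_\lambda'$ with $\Psi_\lambda(u)=\|u\|^p/p-(\lambda/p)\int_\Omega m|u|^p\,dx$, together with the Fadell--Rabinowitz min--max characterisation \eqref{lk} of $\lambda_2(m)$. Along $V=\mathbb{R}\hat u_{1,m}$, the assumption $\lambda>\lambda_1(m)$ gives $\Psi_\lambda(t\hat u_{1,m})<0$ for $t\ne 0$, displaying a one-dimensional direction of strict descent at $0$; conversely, since $\lambda<\lambda_2(m)$, every symmetric compact $S\subset\mathcal{S}_p(m)$ with $i(S)\ge 2$ meets $\{\Psi_\lambda>0\}$. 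I would then split $\partial B_r(0)$ into the two ``poles'' $V\cap\partial B_r(0)$ (descent region) and a saddle complement, and transport this saddle picture to the Galerkin approximations of $A-\lambda K_m$ used to define $\deg_{(S)_+}$: choosing $(X_n)$ with $V\subset X_1$, on each $X_n$ the gradient of $\Psi_\lambda|_{X_n}$ has exactly one descending direction near $0$, producing Brouwer index $(-1)^1=-1$.

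The main obstacle will be the final step of (ii): proving that these Brouwer indices stabilise uniformly to $-1$. This amounts to ruling out spurious zeros of the Galerkin approximants on $\partial B_r(0)\cap X_n$, for which I would rely on a uniform coercivity estimate based on the strict gap $\lambda<\lambda_2(m)$, possibly sharpening $m$ to a slightly larger weight via Proposition \ref{ev2} so that the gap survives the finite-dimensional restriction. Once stabilisation is secured, the claim $\deg_{(S)_+}(A-\lambda K_m,B_r(0),0)=-1$ follows directly from the definition of the $(S)_+$-degree as the eventual value of the Galerkin Brouwer degrees.
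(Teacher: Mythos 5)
The paper does not prove this proposition: it is imported verbatim from \cite[Theorem 3.5]{FRS} (itself modelled on \cite[Lemma 2, Theorem 2]{APS} and the scheme of \cite{PAO}), so there is no in-paper argument to compare against. Evaluating your proposal on its own terms:

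Part (i) is correct and standard. Since $\lambda$ is not an eigenvalue, $0$ is the only zero of $A-\lambda K_m$, so by excision (Proposition~\ref{deg}\ref{d3}) the degree is $r$-independent. The homotopy $h(t,u)=A(u)-t\lambda K_m(u)$ is an $(S)_+$-homotopy, and testing $h(t,u)=0$ against $u$ together with the Rayleigh characterisation in Proposition~\ref{ev1} forces $u=0$ whenever $t\lambda<\lambda_1(m)$; homotopy invariance then reduces the degree to $\deg_{(S)_+}(A,B_r(0),0)$, which equals $1$ by Proposition~\ref{degmin} applied to the strictly convex potential $u\mapsto\|u\|^p/p$ with isolated global minimiser $0$, followed by excision.

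Part (ii), however, has a genuine gap which you yourself flag but do not close. The central claim -- that on each Galerkin subspace $X_n$ the gradient of $\Psi_\lambda|_{X_n}$ ``has exactly one descending direction near $0$, producing Brouwer index $(-1)^1=-1$'' -- is not an argument. For $p\neq 2$ the functional $\Psi_\lambda$ is $p$-homogeneous and not twice differentiable at $0$, so there is no Hessian and no classical Morse index; the Brouwer degree of the finite-dimensional gradient is governed by the topology of the sublevel cone $\{\Psi_\lambda<0\}$, not by a count of ``descending directions.'' Even for $p=2$ the finite-dimensional restrictions need not have Morse index exactly one uniformly in $n$ without further work. The proof that actually underlies Proposition~\ref{ind}\ref{ind2} proceeds differently: one computes the critical groups $C_q(\Psi_\lambda,0)$ via the Fadell--Rabinowitz cohomological index of the symmetric set $\{\Psi_\lambda<0\}\cap\mathcal S_p(m)=\{u\in\mathcal S_p(m):\|u\|^p<\lambda\}$, which by \eqref{lk} and $\lambda_1(m)<\lambda<\lambda_2(m)$ has index exactly $1$, and then applies a Poincar\'e--Hopf type identity relating critical groups to the $(S)_+$-degree. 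Absent that cohomological-index input, your stabilisation claim for the Galerkin degrees is unsupported, and the proposal for (ii) does not constitute a proof.
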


\noindent
We also recall the following technical property:

\begin{proposition}\label{coer}
{\rm \cite[Lemma 2.7]{IL}} Let $\theta\in L^\infty(\Omega)$ be s.t.\ $\theta\le\lambda_1$ in $\Omega$, $\theta\not\equiv\lambda_1$. Then, there exists $\sigma>0$ s.t.\ for all $u\in\w$
\[\|u\|^p-\int_\Omega\theta(x)|u|^p\,dx \ge \sigma\|u\|^p.\]
\end{proposition}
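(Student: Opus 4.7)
The natural approach is proof by contradiction via a normalized minimizing sequence. Suppose no such $\sigma$ exists; then for every $n\in\N$ one can find $v_n\in\w\setminus\{0\}$ with
\[\|v_n\|^p-\int_\Omega\theta(x)|v_n|^p\,dx<\frac{1}{n}\|v_n\|^p.\]
Setting $u_n=v_n/\|v_n\|$, I get a sequence with $\|u_n\|=1$ and
\[1-\int_\Omega\theta(x)|u_n|^p\,dx<\frac{1}{n}.\]
By reflexivity of $\w$, up to a subsequence $u_n\rightharpoonup u$ in $\w$, and by the compact embedding $\w\hookrightarrow L^p(\Omega)$ I have $u_n\to u$ strongly in $L^p(\Omega)$. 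Since $\theta\in L^\infty(\Omega)$, passing to the limit yields $\int_\Omega\theta(x)|u|^p\,dx\ge 1$.

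Next I would chain this with the variational characterization of $\lambda_1$ from Proposition \ref{ev1} (applied with $m\equiv 1$) and the hypothesis $\theta\le\lambda_1$:
\[1\le\int_\Omega\theta(x)|u|^p\,dx\le\lambda_1\int_\Omega|u|^p\,dx\le\|u\|^p\le\liminf_{n\to\infty}\|u_n\|^p=1,\]
where the last inequality uses weak lower semicontinuity of the norm. Equality throughout forces $\|u\|^p=1$, which combined with $u_n\rightharpoonup u$ in the uniformly convex space $\w$ gives $u_n\to u$ strongly (so $u\not\equiv 0$). Moreover, equality $\|u\|^p=\lambda_1\int_\Omega|u|^p\,dx$ identifies $u$ as a minimizer of the Rayleigh quotient, hence a principal eigenfunction; by the uniqueness part of Proposition \ref{ev1}, $u=\pm\hat u_{1,1}\in\mathrm{int}(C^0_s(\overline\Omega)_+)$ (up to sign), so in particular $u(x)\neq 0$ for a.e.\ $x\in\Omega$.

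Finally, from the equality $\int_\Omega\theta(x)|u|^p\,dx=\lambda_1\int_\Omega|u|^p\,dx$ together with $\theta\le\lambda_1$ pointwise, I deduce $(\lambda_1-\theta(x))|u(x)|^p=0$ a.e.\ in $\Omega$, and since $|u|>0$ a.e.\ this gives $\theta\equiv\lambda_1$ in $\Omega$, contradicting the hypothesis $\theta\not\equiv\lambda_1$. The only delicate step is the passage from weak to strong convergence and the use of the strict positivity of $\hat u_{1,1}$ (guaranteed by its membership in the interior of the order cone) to turn the integral equality into a pointwise one; everything else is a routine chain of inequalities.
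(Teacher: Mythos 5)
Your proof is correct and is the standard compactness-by-contradiction argument; the paper itself defers to \cite[Lemma 2.7]{IL} rather than reproducing a proof, and this is the expected one. Two cosmetic remarks: invoking uniform convexity to upgrade $u_n\rightharpoonup u$ to strong convergence is unnecessary, since $\|u\|^p=1$ already gives $u\not\equiv 0$, which is all you use; and $u$ is only a nonzero scalar multiple of $\hat u_{1,1}$, not $\pm\hat u_{1,1}$ itself, because you normalized by the Gagliardo norm rather than by $\|\cdot\|_p$. This is harmless: proportionality to $\hat u_{1,1}\in\mathrm{int}(C^0_s(\overline\Omega)_+)$ already yields $|u|>0$ a.e.\ in $\Omega$, which is the only property needed to pass from $\int_\Omega(\lambda_1-\theta)|u|^p\,dx=0$ to $\theta\equiv\lambda_1$.
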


\noindent
Finally, we consider problem \eqref{evm} with a bounded perturbation $\beta\in L^\infty(\Omega)_+\setminus\{0\}$:
\beq\label{evp}
\begin{cases}
\fpl u = \lambda m(x) |u|^{p-2} u + \beta(x) & \text{in $\Omega$} \\
u=0 & \text{in $\Omega^c$.}
\end{cases}
\eeq
The following result, which will be useful in our study, is analogous to \cite[Lemma 4.1]{DQ2}, dealing with supersolutions for $m=1$ (see \cite[Proposition 4.1]{GGP} for the $p$-Laplacian case):

\begin{lemma}\label{amp}
Let $m,\beta\in L^\infty(\Omega)_+\setminus\{0\}$, $\lambda\ge\lambda_1(m)$, and $u\in\w$ be a solution of \eqref{evp}. Then, $u^-\not\equiv 0$.
\end{lemma}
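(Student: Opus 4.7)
The natural approach is a contradiction argument based on a nonlocal Picone-type inequality. Suppose $u^- \equiv 0$, so $u \geq 0$ in $\Omega$. First, $u \not\equiv 0$ since otherwise \eqref{evp} would reduce to $0 = \beta$, against $\beta \not\equiv 0$. The reaction $f(x,t) = \lambda m(x) t^{p-1} + \beta(x)$ is non-negative for $t \geq 0$, so Proposition~\ref{max} gives $u \in \mathrm{int}(C^0_s(\overline\Omega)_+)$. By Proposition~\ref{ev1} the same holds for $\hat u_{1,m}$, and the normalization $\hat u_{1,m} \in \mathcal S_p(m)$ combined with the Rayleigh quotient characterization of $\lambda_1(m)$ yields $\|\hat u_{1,m}\|^p = \lambda_1(m)$.

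The key step is to test the weak formulation of \eqref{evp} against the Picone ratio $\varphi = \hat u_{1,m}^p / u^{p-1}$. Since both $u$ and $\hat u_{1,m}$ are comparable to $\ds$ near $\partial \Omega$ and bounded away from $0$ on interior compact sets, $\varphi$ behaves like $\ds$ at the boundary and is an admissible test function in $\w$; if necessary, one replaces $u$ by $u + \epsilon$ in the denominator and passes to the limit as $\epsilon \to 0^+$. Substitution yields
\[
\langle A(u), \varphi \rangle = \lambda \int_\Omega m(x)\, \hat u_{1,m}^p \, dx + \int_\Omega \beta(x)\, \frac{\hat u_{1,m}^p}{u^{p-1}} \, dx = \lambda + \int_\Omega \beta(x)\, \frac{\hat u_{1,m}^p}{u^{p-1}} \, dx.
\]
On the other hand, the discrete Picone inequality for the nonlocal nonlinear gradient,
\[
|u(x) - u(y)|^{p-2}(u(x)-u(y))\!\left(\frac{\hat u_{1,m}(x)^p}{u(x)^{p-1}} - \frac{\hat u_{1,m}(y)^p}{u(y)^{p-1}}\right) \leq |\hat u_{1,m}(x) - \hat u_{1,m}(y)|^p,
\]
integrated against the singular kernel $|x-y|^{-N-ps}$, produces $\langle A(u), \varphi \rangle \leq \|\hat u_{1,m}\|^p = \lambda_1(m)$. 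Combining with the previous identity and using $\lambda \geq \lambda_1(m)$ gives
\[
\int_\Omega \beta(x)\, \frac{\hat u_{1,m}^p}{u^{p-1}} \, dx \leq \lambda_1(m) - \lambda \leq 0,
\]
which contradicts the strict positivity of $\beta\, \hat u_{1,m}^p / u^{p-1}$ on a set of positive measure.

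The main obstacle is the rigorous handling of $\varphi$: one must justify that it belongs to $\w$ and that the pointwise Picone estimate can be integrated to the displayed functional inequality. Both points are standard but delicate, and rely on the weighted Hölder regularity of $u$ and $\hat u_{1,m}$ provided by Proposition~\ref{reg}, on the boundary behaviour encoded in membership of $\mathrm{int}(C^0_s(\overline\Omega)_+)$, and possibly on an $\epsilon$-regularization of the denominator to avoid singularities where $u$ is small.
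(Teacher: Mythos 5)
Your proposal is correct and follows essentially the same route as the paper's proof: contradiction via the maximum principle, testing with the Picone ratio $\hat u_{1,m}^p/u^{p-1}$, applying the discrete Picone inequality of Brasco--Franzina, and deriving a contradiction with the variational characterization of $\lambda_1(m)$. The one point worth stressing is that the $\epsilon$-regularization of the denominator (the paper uses $u_n = u + 1/n$) is not merely ``if necessary'' but genuinely required: even with $u \in \mathrm{int}(C^0_s(\overline\Omega)_+)$, the boundary-behavior heuristic $\varphi \sim \ds$ gives $L^\infty$ control but does not by itself yield a finite Gagliardo seminorm for $\varphi$; the paper's Lagrange-type estimate of $|v_n(x) - v_n(y)|$ uses the lower bound $u_n \geq 1/n$ in an essential way and produces a constant $C_n$ that blows up as $n \to \infty$, so one shows $v_n \in \w$ for each fixed $n$ and passes to the limit only in the integrals (via dominated convergence), never in the $\w$-norm.
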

\begin{proof}
Since $\beta\not\equiv 0$, we clearly have $u\not\equiv 0$. We argue by contradiction, assuming $u\in\w_+\setminus\{0\}$. By Proposition \ref{max}, then we have $u\in{\rm int}(C^0_s(\overline\Omega)_+)$. For all $n\in\N$ set
\[u_n=u+\frac{1}{n}.\]
Let $\hat u_{1,m}\in{\rm int}(C^0_s(\overline\Omega)_+)$ be as in Proposition \ref{ev1}, and for all $n\in\N$, $x\in\R^N$ set
\[v_n(x)=\frac{\hat u_{1,m}^p(x)}{u_n^{p-1}(x)}.\]
It is easily seen that $v_n\in L^p(\Omega)_+$ and $v_n=0$ in $\Omega^c$. Moreover, by applying Lagrange's theorem and $u_n\ge\frac{1}{n}$ in $\R
^N$, for all $x,y\in\R^N$ we have
\begin{align*}
|v_n(x)-v_n(y)| &= \Big|\frac{\hat u_{1,m}^p(x)-\hat u_{1,m}^p(y)}{u_n^{p-1}(x)}+\hat u_{1,m}^p(y)\frac{u_n^{p-1}(y)-u_n^{p-1}(x)}{u_n^{p-1}(x)u_n^{p-1}(y)}\Big| \\
&\le n^{p-1}|\hat u_{1,m}^p(x)-\hat u_{1,m}^p(y)|+\|\hat u_{1,m}\|_\infty^p\frac{|u_n^{p-1}(y)-u_n^{p-1}(x)|}{u_n^{p-1}(x)u_n^{p-1}(y)} \\
&\le pn^{p-1}(\hat u_{1,m}^{p-1}(x)+\hat u_{1,m}^{p-1}(y))|\hat u_{1,m}(x)-\hat u_{1,m}(y)| \\
&+ (p-1)n^{p-1}\|\hat u_{1,m}\|_\infty^p\Big(\frac{1}{u_n(x)}+\frac{1}{u_n(y)}\Big)|u(x)-u(y)| \\
&\le C_n\big(|\hat u_{1,m}(x)-\hat u_{1,m}(y)|+|u(x)-u(y)|\big),
\end{align*}
for some $C_n>0$ depending on $n$. So,
\[\iint_{\R^N\times\R^N}\frac{|v_n(x)-v_n(y)|^p}{|x-y|^{N+ps}}\,dx\,dy \le C\big(\|\hat u_{1,m}\|^p+\|u\|^p),\]
which implies $v_n\in\w_+$. By the discrete Picone's inequality \cite[Proposition 4.2]{BF}, for all $x,y\in\R^N$ we have
\begin{align*}
|\hat u_{1,m}(x)-\hat u_{1,m}(y)|^p &\ge |u_n(x)-u_n(y)|^{p-2}(u_n(x)-u_n(y))\Big(\frac{\hat u_{1,m}^p(x)}{u_n^{p-1}(x)}-\frac{\hat u_{1,m}^p(y)}{u_n^{p-1}(y)}\Big) \\
&= |u(x)-u(y)|^{p-2}(u(x)-u(v))(v_n(x)-v_n(y)).
\end{align*}
Using the inequality above and testing \eqref{evp} with $v_n\in\w_+$ we have
\begin{align}\label{amp1}
\|\hat u_{1,m}\|^p &\ge \iint_{\R^N\times\R^N}\frac{|u(x)-u(y)|^{p-2}(u(x)-u(v))(v_n(x)-v_n(y))}{|x-y|^{N+ps}}\,dx\,dy \\
\nonumber &= \lambda\int_\Omega m(x)u^{p-1}v_n\,dx+\int_\Omega\beta(x)v_n\,dx.
\end{align}
From \eqref{amp1} we have
\[\int_\Omega \beta(x)v_n\,dx \le \|\hat u_{1,m}\|^p < \infty,\]
hence $(\beta v_n)$ is a bounded sequence in $L^1(\Omega)$, tending to $\frac{\beta\hat u_{1,m}^p}{u^{p-1}}$ in $\Omega$. By Lebesgue's dominated convergence theorem we have $\frac{\beta\hat u_{1,m}^p}{u^{p-1}}\in L^1(\Omega)$. Besides, clearly $mu^{p-1}v_n\to m\hat u_{1,m}^p$ in $L^1(\Omega)$. Passing to the limit in \eqref{amp1} as $n\to\infty$, we have
\begin{align*}
\|\hat u_{1,m}\|^p &\ge \lambda\int_\Omega m(x)\hat u_{1,m}^p\,dx+\int_\Omega\beta(x)\frac{\hat u_{1,m}^p}{u^{p-1}}\,dx \\
&> \lambda_1(m)\int_\Omega m(x)\hat u_{1,m}^p\,dx,
\end{align*}
against Proposition \ref{ev1}. Thus, we conclude that $u^-\not\equiv 0$.
\end{proof}

\begin{remark}
Most results in this section also hold in the singular case $p\in(1,2)$. The assumption $p\ge 2$ is only required to have regularity as in Proposition \ref{reg} and the consequent Proposition \ref{svh} (see \cite{IMS1,IMS5}).
\end{remark}

\section{Two nontrivial solutions for jumping reactions}\label{sec4}

\noindent
In this section we study problem \eqref{dir} under the following hypotheses, which imply a symmetric 'jump' over the principal eigenvalue between $0$ and $\pm\infty$:

\begin{itemize}[leftmargin=1cm]
\item[${\bf H}_1$] $f:\Omega\times\R\to\R$ is a Carath\'{e}odory mapping satisfying
\begin{enumroman}
\item\label{h11} for all $M>0$ there exists $a_M \in L^{\infty}(\Omega)_+$ s.t.\ for a.e.\ $x\in\Omega$ and all $|t| \leq M$
\[|f(x,t)| \leq a_M(x);\]
\item\label{h12} there exist $\theta_1, \theta_2 \in L^{\infty}(\Omega)_+$ s.t.\ $\theta_1 \leq \theta_2 \leq \lambda_1$ in $\Omega$, $\theta_2 \not\equiv \lambda_1$, and uniformly for a.e. $x \in \Omega$
\[\theta_1(x) \leq \liminf_{|t|\to \infty}\frac{f(x,t)}{|t|^{p-2}t} \leq \limsup_{|t|\to \infty}\frac{f(x,t)}{|t|^{p-2}t} \leq \theta_2(x);\] 
\item\label{h13} there exist $\eta_1, \eta_2 \in L^{\infty}(\Omega)$ s.t.\ $\lambda_1 \leq \eta_1 \leq \eta_2<\lambda_2$ in $\Omega$, $\eta_1 \not\equiv \lambda_1$, and uniformly for a.e. $x \in \Omega$
\[\eta_1(x) \leq \liminf_{t \to 0}\frac{f(x,t)}{|t|^{p-2}t} \leq \limsup_{t \to 0}\frac{f(x,t)}{|t|^{p-2}t} \leq \eta_2(x).\] 
\end{enumroman}
\end{itemize}

\noindent
Note that we assume non-resonance both at $0$ and $\pm\infty$, with a relevant difference: non-resonance with $\lambda_1$ is only required on a subset of $\Omega$ with positive measure, while non-resonance with $\lambda_2$ must hold on the whole $\Omega$. Clearly, ${\bf H}_1$ implies ${\bf H}_0$ (with $q=p$). So, all the results of Section \ref{sec3} apply here. Besides, from ${\bf H}_1$ \ref{h13} we immediately see that $f(\cdot,0)=0$ in $\Omega$, hence problem \eqref{dir} admits the trivial solution $u=0$. We aim at proving the existence of nontrivial solutions, so we may assume, without loss of generality, that \eqref{dir} has {\em finitely many} solutions.

\begin{example}
The autonomous mapping $f \in C(\R)$ defined by
\[f(t)=\theta|t|^{p-2}t+(\eta-\theta)|t|^{p-2}t\frac{\ln(1+|t|)}{|t|},\]
with $\theta < \lambda_1 < \eta < \lambda_2$, satisfies ${\bf H}_1$.
\end{example}

\noindent
In the following lemmas we study the behavior of the operator $A-N_f$. We begin with an existence result:

\begin{lemma}\label{exi}
If ${\bf H}_1$ holds, then \eqref{dir} has a solution $u_0 \in C_s^{\alpha}(\overline{\Omega})\setminus \{0\}$. Moreover, there exists $\rho_0>0$ s.t.\ for all $\rho\in(0,\rho_0]$
\[\deg_{(S)_+}(A-N_f, B_{\rho}(u_0),0)=1.\]
\end{lemma}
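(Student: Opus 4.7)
The plan is to exhibit $u_0$ as a global minimizer of the energy $\Phi$, thereby simultaneously obtaining existence, nontriviality, and (via Proposition \ref{degmin}) the degree formula. The WLOG assumption stated before the lemma, that \eqref{dir} has only finitely many solutions, guarantees that the minimizer is an isolated critical point, which is what Proposition \ref{degmin} requires.

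First I would establish coercivity of $\Phi$. By ${\bf H}_1$\ref{h11}--\ref{h12}, for every $\eps>0$ there exists $C_\eps>0$ such that
\[
F(x,t)\le\frac{1}{p}(\theta_2(x)+\eps)|t|^p+C_\eps\qquad\text{for a.e.\ $x\in\Omega$, all $t\in\R$.}
\]
Since $\theta_2\le\lambda_1$ and $\theta_2\not\equiv\lambda_1$, Proposition \ref{coer} supplies $\sigma>0$ with $\|u\|^p-\int_\Omega\theta_2|u|^p\,dx\ge\sigma\|u\|^p$. Together with the continuous embedding $\w\hookrightarrow L^p(\Omega)$ (constant $C'>0$), this gives
\[
p\,\Phi(u)\ge\sigma\|u\|^p-\eps\,C'\|u\|^p-p\,C_\eps|\Omega|.
\]
Choosing $\eps<\sigma/C'$ yields coercivity. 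Weak lower semicontinuity of $\Phi$ is standard: $u\mapsto\|u\|^p/p$ is convex and continuous, while the reaction term is sequentially weakly continuous by the compact embedding $\w\hookrightarrow L^p(\Omega)$ combined with the growth in ${\bf H}_1$\ref{h12}. Hence $\Phi$ attains a global minimum at some $u_0\in\w$.

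Next I would prove $u_0\neq 0$ by exhibiting a negative value of $\Phi$. Let $\hat u_1\in\mathrm{int}(C^0_s(\overline\Omega)_+)$ be the principal eigenfunction of $\fpl$ (with weight $m\equiv 1$), normalized so that $\|\hat u_1\|^p=\lambda_1\int_\Omega\hat u_1^p\,dx$ (Proposition \ref{ev1}). By ${\bf H}_1$\ref{h13}, for every $\eps>0$ there is $\delta>0$ such that $F(x,t)\ge\frac{1}{p}(\eta_1(x)-\eps)|t|^p$ for a.e.\ $x\in\Omega$ and $|t|\le\delta$. For $\tau>0$ so small that $\tau\|\hat u_1\|_\infty\le\delta$,
\[
p\,\Phi(\tau\hat u_1)\le \tau^p\int_\Omega\bigl(\lambda_1-\eta_1(x)+\eps\bigr)\hat u_1^p\,dx.
\]
Since $\eta_1\ge\lambda_1$ with strict inequality on a set of positive measure and $\hat u_1>0$ in $\Omega$, the integral $\int_\Omega(\lambda_1-\eta_1)\hat u_1^p\,dx$ is strictly negative, so the right-hand side is negative once $\eps$ is small enough. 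Thus $\Phi(u_0)\le\Phi(\tau\hat u_1)<0=\Phi(0)$, forcing $u_0\neq 0$. Regularity $u_0\in C^\alpha_s(\overline\Omega)$ comes from Proposition \ref{reg}.

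Finally, $u_0$ is in particular a local minimizer of $\Phi$, and by our WLOG assumption it is an isolated critical point. Since $\Phi'=A-N_f$ is a demicontinuous $(S)_+$-map, Proposition \ref{degmin} produces $\rho_0>0$ such that $\deg_{(S)_+}(A-N_f,B_\rho(u_0),0)=1$ for all $\rho\in(0,\rho_0]$. The main subtle point in the above is the coercivity argument: Proposition \ref{coer} cannot be applied with weight $\theta_2+\eps$ (which may exceed $\lambda_1$ pointwise), so one must absorb the $\eps\|u\|_p^p$ term into the coercive gain furnished by $\theta_2\not\equiv\lambda_1$, rather than folding $\eps$ into the weight.
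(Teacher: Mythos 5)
Your proof is correct and follows essentially the same route as the paper: show $\Phi$ is coercive via Proposition \ref{coer} and the growth bound from ${\bf H}_1$\ref{h12}, obtain a global minimizer, prove $\Phi(\tau\hat u_1)<0$ for small $\tau>0$ using ${\bf H}_1$\ref{h13} to get nontriviality, then apply Propositions \ref{reg} and \ref{degmin}. The only cosmetic difference is that the paper absorbs the $\eps\|u\|_p^p$ term via the eigenvalue inequality $\lambda_1\|u\|_p^p\le\|u\|^p$ rather than a generic embedding constant $C'$; both do the same job, and your closing caveat about not folding $\eps$ into the weight is exactly the point the paper's computation respects.
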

\begin{proof}
By ${\bf H}_1$ \ref{h12} and Proposition \ref{coer}, there exists $\sigma>0$ s.t.\ for all $u\in\w$
\beq\label{exi1}
\|u\|^p-\int_\Omega\theta_2(x)|u|^p\,dx \ge \sigma\|u\|^p.
\eeq
Fix $\eps\in(0,\sigma\lambda_1)$. By ${\bf H}_1$ \ref{h12} we can find $M>0$ s.t.\ for a.e.\ $x\in\Omega$ and all $|t|>M$
\[\frac{f(x,t)}{|t|^{p-2}t} \le \theta_2(x)+\eps.\]
By ${\bf H}_1$ \ref{h11}, for a.e.\ $x\in\Omega$ and all $|t|\le M$ we have
\[|f(x,t)| \le a_M(x).\]
So, for a.e.\ $x\in\Omega$ and all $t>M$ we get
\begin{align*}
F(x,t) &\le \int_0^M|f(x,\tau)|\,d\tau+\int_M^t(\theta_2(x)+\eps)\tau^{p-1}\,d\tau \\
&\le Ma_M(x)+\frac{\theta_2(x)+\eps}{p}(t^p-M^p) \\
&\le \frac{\theta_2(x)+\eps}{p}t^p+C.
\end{align*}
Similar estimates hold for $t\le M$, so for a.e.\ $x\in\Omega$ and all $t\in\R$ we have
\beq\label{exi2}
F(x,t) \le \frac{\theta_2(x)+\eps}{p}|t|^p+C.
\eeq
Define $\Phi\in C^1(\w)$ as in Section \ref{sec3}. By \eqref{exi1}, \eqref{exi2}, and Proposition \ref{ev1} we have for all $u\in\w$
\begin{align*}
\Phi(u) &\ge \frac{\|u\|^p}{p}-\int_\Omega\Big(\frac{\theta_2(x)+\eps}{p}|u|^p+C\Big)\,dx \\
&\ge \frac{\sigma}{p}\|u\|^p-\frac{\eps}{p}\|u\|_p^p-C \\
&\ge \Big(\sigma-\frac{\eps}{\lambda_1}\Big)\frac{\|u\|^p}{p}-C,
\end{align*}
and the latter tends to $\infty$ as $\|u\|\to\infty$. So, $\Phi$ is coercive in $\w$. Plus, it is sequentially weakly l.s.c. Thus, there exists $u_0\in\w$ s.t.
\beq\label{exi3}
\Phi(u_0) = \inf_{u\in\w}\Phi(u) =: \mu_0.
\eeq
Let $\hat u_1\in{\rm int}(C^0_s(\overline\Omega)_+)$ be as in Proposition \ref{ev1} (with $m\equiv 1$). By ${\bf H}_1$ \ref{h13} we have
\[\int_\Omega\eta_1(x)\hat u_1^p\,dx > \lambda_1.\]
Fix now $\eps>0$ s.t.\
\[\eps < \int_\Omega\eta_1(x)\hat u_1^p\,dx-\lambda_1.\]
By ${\bf H}_1$ \ref{h13} there exists $\delta>0$ s.t.\ for a.e.\ $x\in\Omega$ and all $t\in(0,\delta]$
\[\frac{f(x,t)}{t^{p-1}} \ge \eta_1(x)-\eps,\]
hence
\[F(x,t) \ge \frac{\eta_1(x)-\eps}{p}t^p.\]
For all $\tau>0$ small enough we have $0<\tau\hat u_1\le\delta$ in $\Omega$, so, recalling Proposition \ref{ev1}, we have
\begin{align*}
\Phi(\tau\hat u_1) &\le \frac{\tau^p}{p}\|\hat u_1\|^p-\int_\Omega\frac{\eta_1(x)-\eps}{p}(\tau\hat u_1)^p\,dx \\
&= \frac{\tau^p}{p}\Big(\lambda_1-\int_\Omega\eta_1(x)\hat u_1^p\,dx+\eps\Big) < 0.
\end{align*}
Then we have $\mu_0<0$ in \eqref{exi3}, in particular $u_0\neq 0$. From \eqref{exi3} we have $\Phi'(u_0)=0$ in $W^{-s,p'}(\Omega)$, so $u_0$ solves \eqref{dir}. By Proposition \ref{reg} we have $u_0\in C^\alpha_s(\overline\Omega)\setminus\{0\}$.
\vskip2pt
\noindent
Finally, recalling that $\Phi'=A-N_f$ is a demicontinuous $(S)_+$-map and $u_0$ is a local minimizer of $\Phi$ and an isolated critical point (by the assumption that $\Phi$ has only finitely many such points), by Proposition \ref{degmin} there exists $\rho_0>0$ s.t.\ for all $\rho\in(0,\rho_0]$
\[\deg_{(S)_+}(A-N_f,B_\rho(u_0),0) = 1,\]
which concludes the proof.
\end{proof}

\noindent
The next lemma deals with the asymptotic behavior of $A-N_f$:

\begin{lemma}\label{big}
If ${\bf H}_1$ holds, then there exists $R_0>0$ s.t.\ for all $R\ge R_0$
\[\deg_{(S)_+}(A-N_f,B_R(0),0) = 1.\]
\end{lemma}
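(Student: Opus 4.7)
The plan is to homotope the operator $A - N_f$ on $\overline{B_R(0)}$ to the weighted eigenvalue operator $A - K_{\theta_2}$, whose degree on $B_R(0)$ is $1$. Indeed, by the strong monotonicity in Proposition \ref{ev1} applied to $\theta_2 \le \lambda_1$, $\theta_2 \not\equiv \lambda_1$, we have $\lambda_1(\theta_2) > \lambda_1(\lambda_1) = 1$, so $1 \in (0, \lambda_1(\theta_2))$ and Proposition \ref{ind} \ref{ind1} gives $\deg_{(S)_+}(A - K_{\theta_2}, B_R(0), 0) = 1$.

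I will use the affine $(S)_+$-homotopy
\[h(t, u) = A(u) - (1 - t) K_{\theta_2}(u) - t N_f(u), \qquad t \in [0, 1],\]
legitimate since $A$ is a demicontinuous $(S)_+$-map and both $K_{\theta_2}, N_f$ are completely continuous. By Proposition \ref{deg} \ref{d4}, the whole argument reduces to a uniform a priori bound: some $R_0 > 0$ such that $h(t, u) \neq 0$ whenever $\|u\| \ge R_0$ and $t \in [0, 1]$. Suppose this fails and extract $(t_n) \subset [0, 1]$ and $(u_n) \subset \w$ with $\|u_n\| \to \infty$ and $h(t_n, u_n) = 0$. Rescaling $v_n = u_n/\|u_n\|$ and dividing by $\|u_n\|^{p-1}$ (by $(p-1)$-homogeneity of $A$ and $K_{\theta_2}$) turns the equation into
\[A(v_n) = (1 - t_n) K_{\theta_2}(v_n) + t_n g_n, \qquad g_n := f(\cdot, u_n)/\|u_n\|^{p-1}.\]
The growth from ${\bf H}_1$ \ref{h11}, \ref{h12} makes $(g_n)$ bounded in $L^{p'}(\Omega)$, so along a subsequence $t_n \to t \in [0,1]$, $v_n \rightharpoonup v$ in $\w$ (hence $v_n \to v$ in $L^p(\Omega)$), and $g_n \rightharpoonup g$ in $L^{p'}(\Omega)$. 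Testing with $v_n - v$, complete continuity of $K_{\theta_2}$ together with the weak-strong pairing $\int_\Omega g_n(v_n - v)\,dx \to 0$ yields $\langle A(v_n), v_n - v\rangle \to 0$; the $(S)_+$ property gives $v_n \to v$ strongly, so $\|v\| = 1$, and in the limit $A(v) = (1 - t) K_{\theta_2}(v) + t g$.

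The contradiction, which I expect to be the main technical step, comes from a one-sided asymptotic bound rather than from identifying $g$ precisely. From ${\bf H}_1$ \ref{h11}, \ref{h12} one extracts, for every $\eps > 0$, a constant $C_\eps > 0$ with $f(x, \tau)\tau \le (\theta_2(x) + \eps)|\tau|^p + C_\eps$ for a.e.\ $x \in \Omega$ and all $\tau \in \R$. Setting $\tau = u_n(x)$, dividing by $\|u_n\|^p$ and integrating yields $\int_\Omega g_n v_n\,dx \le \int_\Omega (\theta_2 + \eps)|v_n|^p\,dx + C_\eps|\Omega|/\|u_n\|^p$; sending $n \to \infty$ (using $\int_\Omega g_n v_n\,dx \to \int_\Omega g v\,dx$ from $g_n \rightharpoonup g$ in $L^{p'}$ paired against $v_n \to v$ in $L^p$) and then $\eps \to 0^+$, one obtains $\int_\Omega g v\,dx \le \int_\Omega \theta_2|v|^p\,dx$. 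Testing the limit equation with $v$ then gives $\|v\|^p \le \int_\Omega \theta_2|v|^p\,dx$, contradicting Proposition \ref{coer} (with $\theta = \theta_2$) since $v \neq 0$. The a priori bound follows, and homotopy invariance closes the proof. The delicate point is precisely this last estimate, since only $\liminf/\limsup$ control on the quotient $f(x, t)/(|t|^{p-2}t)$ is assumed: the asymmetric one-sided bound at infinity, combined with the coercivity from Proposition \ref{coer}, sidesteps the need to identify $g$ explicitly as a weighted term.
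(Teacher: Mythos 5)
Your proof is correct, and the approach differs from the paper's in a meaningful way at the key step (deriving a contradiction from a blow-up sequence). Both proofs set up the same affine $(S)_+$-homotopy from $A-N_f$ to $A-K_m$ (you take $m=\theta_2$, the paper takes an arbitrary $m_\infty$ with $\theta_1\le m_\infty\le\theta_2$, an inessential choice), both normalize $v_n=u_n/\|u_n\|$, and both use the $(S)_+$-property to get strong convergence $v_n\to v$ with $\|v\|=1$. Where you diverge is in how you produce the contradiction. The paper carefully identifies the weak $L^{p'}$-limit of $g_n$ pointwise, splitting $\Omega$ into $\{v>0\}$, $\{v<0\}$, $\{v=0\}$ and showing $g_\infty=\hat g_\infty|v|^{p-2}v$ with $\theta_1\le\hat g_\infty\le\theta_2$; it then reads the limit equation as a weighted eigenvalue problem and invokes the strict monotonicity of $\lambda_1(\cdot)$ from Proposition \ref{ev1}. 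You instead derive the one-sided integral bound $\int_\Omega g v\,dx\le\int_\Omega\theta_2|v|^p\,dx$ directly from ${\bf H}_1$\ref{h12} via a pointwise inequality $f(x,\tau)\tau\le(\theta_2(x)+\eps)|\tau|^p+C_\eps$ and the weak--strong pairing, test the limit equation with $v$, and contradict the coercivity estimate of Proposition \ref{coer}. Your argument is shorter and avoids the delicate identification of $g_\infty$ on the nodal set of $v$; what it buys in simplicity it gives back in generality, since the paper's identification step is precisely the tool reused in Lemma \ref{sml}, where the weight lies strictly between $\lambda_1$ and $\lambda_2$ and a Rayleigh-quotient argument like yours would no longer close the contradiction. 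For the present Lemma both are valid, and yours is arguably cleaner. (One shared, minor subtlety: Proposition \ref{ind} needs $m\not\equiv 0$, so one should tacitly assume $\theta_2\not\equiv 0$; the paper's choice of $m_\infty$ has exactly the same implicit requirement, so this is not a flaw specific to your argument.)
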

\begin{proof}
Fix $m_\infty\in L^\infty(\Omega)_+$ s.t.\ $\theta_1\le m_\infty\le\theta_2$ in $\Omega$, and define $K_{m_\infty}:\w\to W^{-s,p'}(\Omega)$ as in Section \ref{sec3}, hence $A-K_{m_\infty}$ is a demicontinuous $(S)_+$-map. Now set for all $(t,u)\in[0,1]\times\w$
\[h_\infty(t,u) = A(u)-(1-t)N_f(u)-tK_{m_\infty}(u).\]
As seen in Section \ref{sec2}, $h_\infty:[0,1]\times\w\to W^{-s,p'}(\Omega)$ is a $(S)_+$-homotopy. We claim that there exists $R_0>0$ s.t.\
\beq\label{big2}
h_\infty(t,u) \neq 0 \ \text{for all $t\in[0,1]$, $\|u\|\ge R_0$.}
\eeq
Arguing by contradiction, assume that there exist sequences $(t_n)$ in $[0,1]$, $(u_n)$ in $\w$ s.t.\ $\|u_n\|\to\infty$ and for all $n\in\N$ we have $h_\infty(t_n,u_n)=0$ in $W^{-s,p'}(\Omega)$, i.e.,
\[A(u_n) = (1-t_n)N_f(u_n)+t_nK_{m_\infty}(u_n).\]
Passing to a subsequence if necessary, we have $t_n\to t$ and $\|u_n\|>0$. Set for all $n\in\N$
\[v_n = \frac{u_n}{\|u_n\|}.\]
The sequence $(v_n)$ is obviously bounded in $\w$, so passing to a further subsequence we have $v_n\rightharpoonup v$ in $\w$, $v_n\to v$ in $L^p(\Omega)$, and $v_n(x)\to v(x)$ for a.e.\ $x\in\Omega$. Dividing the equality above by $\|u_n\|^{p-1}$ we get for all $n\in\N$
\beq\label{big3}
A(v_n) = (1-t_n)g_n+t_n K_{m_\infty}(v_n),
\eeq
where we have set for all\ $x\in\Omega$
\[g_n(x) = \frac{f(x,u_n(x))}{\|u_n\|^{p-1}}.\]
Reasoning as in Lemma \ref{exi} we see that there exists $C>0$ s.t.\ for a.e.\ $x\in\Omega$ and all $t\in\R$
\beq\label{big4}
|f(x,t)| \le C(1+|t|^{p-1}).
\eeq
We focus on the first term on the right-hand side of \eqref{big3}. By \eqref{big4}, we have for all $n\in\N$
\begin{align*}
\int_\Omega |g_n(x)|^{p'}\,dx &\le C\int_\Omega \frac{(1+|u_n|^{p-1})^{p'}}{\|u_n\|^p}\,dx \\
&\le C\frac{1+\|u_n\|_p^p}{\|u_n\|^p},
\end{align*}
and the latter is bounded by the continuous embedding $\w\hookrightarrow L^p(\Omega)$. So, $(g_n)$ is a bounded sequence in $L^{p'}(\Omega)$. Passing to a subsequence, we have $g_n\rightharpoonup g_\infty$ in $L^{p'}(\Omega)$. We claim that there exists $\hat g_\infty\in L^\infty(\Omega)$ s.t.\ in $\Omega$
\beq\label{big5}
g_\infty = \hat g_\infty|v|^{p-2}v, \ \theta_1\le\hat g_\infty\le\theta_2.
\eeq
Indeed, set
\[\Omega^+=\big\{x\in\Omega:\,v(x)>0\big\}, \ \Omega^-=\big\{x\in\Omega:\,v(x)<0\big\}, \ \Omega^0=\big\{x\in\Omega:\,v(x)=0\big\}.\]
Then fix $\eps>0$ and set for all $n\in\N$
\[\Omega^+_{\eps,n} = \Big\{x\in\Omega:\,u_n(x)>0,\,\theta_1(x)-\eps\le\frac{f(x,u_n(x))}{u_n^{p-1}(x)}\le\theta_2(x)+\eps\Big\}.\]
For a.e.\ $x\in\Omega^+$ we have $v_n(x)\to v(x)>0$ as $n\to\infty$, hence $u_n(x)\to\infty$. Recalling ${\bf H}_1$ \ref{h12}, for all $n\in\N$ big enough we have $u_n(x)>0$ and
\[\theta_1(x)-\eps\le\frac{f(x,u_n(x))}{u_n^{p-1}(x)}\le\theta_2(x)+\eps,\]
i.e.,\ $x\in\Omega^+_{\eps,n}$. In other words, $\chi_{\Omega^+_{\eps,n}}\to 1$ a.e.\ in $\Omega^+$, with bounded convergence, which implies $\chi_{\Omega^+_{\eps,n}}g_n\rightharpoonup g_\infty$ in $L^{p'}(\Omega^+)$. By definition of $v_n$, for all $n\in\N$ big enough we have in $\Omega^+$
\[\chi_{\Omega^+_{\eps,n}}(\theta_1-\eps)v_n^{p-1} \le \chi_{\Omega^+_{\eps,n}}g_n \le \chi_{\Omega^+_{\eps,n}}(\theta_2+\eps)v_n^{p-1}.\]
Passing to the limit as $n\to\infty$ we get in $\Omega^+$
\[(\theta_1-\eps)v^{p-1} \le g_\infty \le (\theta_2+\eps)v^{p-1}.\]
Further, letting $\eps\to 0^+$, we get in $\Omega^+$
\[\theta_1v^{p-1} \le g_\infty \le \theta_2v^{p-1},\]
which proves the claim in $\Omega^+$. Similarly, considering the set
\[\Omega^-_{\eps,n} = \Big\{x\in\Omega:\,u_n(x)<0,\,\theta_1(x)-\eps\le\frac{f(x,u_n(x))}{|u_n(x)|^{p-2}u_n(x)}\le\theta_2(x)+\eps\Big\},\]
we get in $\Omega^-$
\[\theta_2|v|^{p-2}v \le g_\infty \le \theta_1|v|^{p-2}v.\]
Finally, for a.e.\ $x\in\Omega^0$ we have $v_n(x)\to 0$, which with $\|u_n\|\to\infty$ and \eqref{big4} implies $g_n(x)\to 0$. So we have $g_\infty=0$ in $\Omega^0$, which completes the argument for \eqref{big5}.
\vskip2pt
\noindent
Now go back to \eqref{big3}, which we test with $v_n-v\in\w$ getting
\[\langle A(v_n),v_n-v\rangle = (1-t_n)\int_\Omega g_n(x)(v_n-v)\,dx+t_n\int_\Omega m_\infty(x)|v_n|^{p-2}v_n(v_n-v)\,dx,\]
and the latter tends to $0$ as $n\to\infty$, so we have
\[\limsup_n\langle A(v_n),v_n-v\rangle \le 0.\]
By the $(S)_+$-property of $A$, we deduce that $v_n\to v$ in $\w$, in particular $\|v\|=1$. Besides, passing to the limit in \eqref{big3} as $n\to\infty$ and applying \eqref{big5}, we have in $W^{-s,p'}(\Omega)$
\[A(v) = \tilde g_\infty|v|^{p-2}v,\]
where we have set for all $x\in\Omega$
\[\tilde g_\infty(x) = (1-t)\hat g_\infty(x)+tm_\infty(x).\]
In other words, $v$ solves the weighted eigenvalue problem
\beq\label{big6}
\begin{cases}
\fpl v = \tilde g_\infty(x)|v|^{p-2}v & \text{in $\Omega$} \\
v = 0 & \text{in $\Omega^c$.}
\end{cases}
\eeq
Clearly $\tilde g_\infty\in L^\infty(\Omega)$, and due to \eqref{big5} and the choice of $m_\infty$ it satisfies $\theta_1\le \tilde g_\infty\le \theta_2$ in $\Omega$. By ${\bf H}_1$ \ref{h12}, then, we have $\tilde g_\infty\le\lambda_1$ in $\Omega$ and $\tilde g_\infty\not\equiv\lambda_1$, hence by Proposition \ref{ev1}
\[\lambda_1(\tilde g_\infty) > \lambda_1(\lambda_1) = 1.\]
Thus, by \eqref{big6}, $v\neq 0$ is an eigenfunction with weight $\tilde g_\infty$, associated to the eigenvalue $1$, against Proposition \ref{ev1}. This proves \eqref{big2}.
\vskip2pt
\noindent
Now we can apply Proposition \ref{deg} \ref{d4} (homotopy invariance), which gives for all $R\ge R_0$
\beq\label{big7}
\deg_{(S)_+}(A-N_f,B_R(0),0) = \deg_{(S)_+}(A-K_{m_\infty},B_R(0),0).
\eeq
To conclude, we compute the degree of $A-K_{m_\infty}$. By ${\bf H}_1$ \ref{h12} we have $m_\infty\le\lambda_1$ in $\Omega$ and $m_\infty\not\equiv\lambda_1$, hence by Proposition \ref{ev1} we have
\[\lambda_1(m_\infty) > \lambda_1(\lambda_1) = 1.\]
Therefore, by Proposition \ref{ind} \ref{ind1} we have for all $R>0$
\[\deg_{(S)_+}(A-K_{m_\infty},B_R(0),0) = 1,\]
which along with \eqref{big7} gives for all $R\ge R_0$
\[\deg_{(S)_+}(A-N_f,B_R(0),0) = 1,\]
thus concluding the proof.
\end{proof}

\noindent
The last lemma deals with the behavior of $A-N_f$ near $0$:

\begin{lemma}\label{sml}
If ${\bf H}_1$ holds, then there exists $r_0>0$ s.t.\ for all $r\in(0,r_0]$
\[\deg_{(S)_+}(A-N_f,B_r(0),0) = -1.\]
\end{lemma}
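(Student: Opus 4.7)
The plan is to mimic the homotopy strategy of Lemma \ref{big}, but now contracting toward the origin and using the near-zero asymptotics ${\bf H}_1$ \ref{h13} in place of ${\bf H}_1$ \ref{h12}. I pick a weight $m_0\in L^\infty(\Omega)_+\setminus\{0\}$ with $\eta_1\le m_0\le\eta_2$ in $\Omega$ (for instance $m_0=\eta_1$) and consider the $(S)_+$-homotopy
\[h_0(t,u)=A(u)-(1-t)\,N_f(u)-t\,K_{m_0}(u),\qquad (t,u)\in[0,1]\times\w.\]
I will show that there exists $r_0>0$ such that $h_0(t,u)\neq 0$ for every $t\in[0,1]$ and every $u\in\w$ with $0<\|u\|\le r_0$. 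Once this is established, Proposition \ref{deg} \ref{d4} reduces the degree to $\deg_{(S)_+}(A-K_{m_0},B_r(0),0)$; since $m_0\ge\lambda_1$ with $m_0\not\equiv\lambda_1$ gives $\lambda_1(m_0)<1$ by Proposition \ref{ev1}, and $m_0\le\eta_2<\lambda_2$ in $\Omega$ gives $\lambda_2(m_0)>1$ by Proposition \ref{ev2}, Proposition \ref{ind} \ref{ind2} applied with $\lambda=1\in(\lambda_1(m_0),\lambda_2(m_0))$ yields exactly $-1$.

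For admissibility I argue by contradiction: extract sequences $t_n\to t\in[0,1]$ and $u_n\to 0$ in $\w\setminus\{0\}$ with $h_0(t_n,u_n)=0$, and normalize $v_n=u_n/\|u_n\|$, so $v_n\rightharpoonup v$ in $\w$ up to a subsequence. Combining ${\bf H}_1$ \ref{h11}, \ref{h12}, \ref{h13} yields a global bound $|f(x,t)|\le C|t|^{p-1}$ on $\Omega\times\R$ (this is where the $(p-1)$-linear behavior at $0$ enters crucially, improving on the $1+|t|^{p-1}$ bound used in Lemma \ref{big}), so $g_n(x):=f(x,u_n)/\|u_n\|^{p-1}$ satisfies $|g_n|\le C|v_n|^{p-1}$ and is bounded in $L^{p'}(\Omega)$. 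Dividing $h_0(t_n,u_n)=0$ by $\|u_n\|^{p-1}$ rewrites the identity as
\[A(v_n)=(1-t_n)\,g_n+t_n\,K_{m_0}(v_n),\]
and testing against $v_n-v$ makes the right-hand side vanish in the limit via the compact embedding $\w\hookrightarrow L^p(\Omega)$; the $(S)_+$-property of $A$ then promotes $v_n\rightharpoonup v$ to strong convergence in $\w$, so $\|v\|=1$ and in particular $v\neq 0$.

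The main obstacle is the identification of the weak limit of $(g_n)$. Following the scheme of \eqref{big5}, but applied to the near-zero regime rather than at infinity, and exploiting that on $\{v\neq 0\}$ we have $u_n(x)=v_n(x)\|u_n\|\to 0$ with a fixed sign eventually (so ${\bf H}_1$ \ref{h13} controls $f(x,u_n)/|u_n|^{p-2}u_n$ from both sides by $\eta_1$ and $\eta_2$), I produce $\tilde g\in L^\infty(\Omega)$ with $\eta_1\le\tilde g\le\eta_2$ such that $g_n\rightharpoonup \tilde g\,|v|^{p-2}v$ in $L^{p'}(\Omega)$ (on $\{v=0\}$ the bound $|g_n|\le C|v_n|^{p-1}$ together with $v_n\to v=0$ in $L^p$ forces the weak limit to vanish, so the value of $\tilde g$ there is irrelevant). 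Passing to the limit in the normalized equation, $v$ solves the weighted eigenvalue problem
\[A(v)=m^*(x)\,|v|^{p-2}v\quad\text{in }W^{-s,p'}(\Omega),\qquad m^*:=(1-t)\tilde g+t\,m_0,\]
with $\eta_1\le m^*\le\eta_2$. Exactly as in the degree computation above, Propositions \ref{ev1} and \ref{ev2} give $\lambda_1(m^*)<1<\lambda_2(m^*)$, so $1$ would be an eigenvalue of $\fpl$ with weight $m^*$ strictly between the first two, contradicting the minimality assertion in Proposition \ref{ev2}. This contradiction produces $r_0$, and the degree computation then completes the proof.
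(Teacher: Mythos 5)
Your proposal matches the paper's proof in all essentials: the same homotopy $h_0$ to $A-K_{m_0}$ with $\eta_1\le m_0\le\eta_2$, the same blow-up normalization $v_n=u_n/\|u_n\|$, the same derivation of the global bound $|f(x,t)|\le C|t|^{p-1}$ from ${\bf H}_1$, the same identification of the weak limit of $g_n$ as $\tilde g\,|v|^{p-2}v$ with $\eta_1\le\tilde g\le\eta_2$, the same contradiction via $\lambda_1(m^*)<1<\lambda_2(m^*)$ from Propositions \ref{ev1} and \ref{ev2}, and the same final degree computation via Proposition \ref{ind} \ref{ind2}. This is the paper's argument.
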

\begin{proof}
Fix $m_0\in L^\infty(\Omega)$ s.t.\ $\eta_1\le m_0\le \eta_2$ in $\Omega$, and define $K_{m_0}:\w\to W^{-s,p'}(\Omega)$ as in Section \ref{sec3}, hence $A-K_{m_0}$ is a demicontinuous $(S)_+$-map. As in Lemma \ref{big}, we define a $(S)_+$-homotopy $h_0:[0,1]\times\w\to W^{-s,p'}(\Omega)$ by setting for all $(t,u)\in[0,1]\times\w$
\[h_0(t,u) = A(u)-(1-t)N_f(u)-tK_{m_0}(u).\]
We claim that there exists $r_0>0$ s.t.\
\beq\label{sml2}
h_0(t,u) \neq 0 \ \text{for all $t\in[0,1]$, $0<\|u\|\le r_0$.}
\eeq
Arguing as above by contradiction, assume that there exist sequences $(t_n)$ in $[0,1]$, $(u_n)$ in $\w\setminus\{0\}$ s.t.\ $u_n\to 0$ in $\w$ and for all $n\in\N$ we have $h_0(t_n,u_n)=0$ in $W^{-s,p'}(\Omega)$. Set for all $n\in\N$
\[v_n = \frac{u_n}{\|u_n\|}.\]
Passing to a subsequence, we have $t_n\to t$, as well as $v_n\rightharpoonup v$ in $\w$, $v_n\to v$ in $L^p(\Omega)$, and $v_n(x)\to v(x)$ for a.e.\ $x\in\Omega$. Plus, for all $n\in\N$ we have
\beq\label{sml3}
A(v_n) = (1-t_n)g_n+t_n K_{m_0}(v_n),
\eeq
where we have set for all $x\in\Omega$
\[g_n(x) = \frac{f(x,u_n(x))}{\|u_n\|^{p-1}}.\]
By ${\bf H}_1$ \ref{h12} \ref{h13}, we can find $C>0$, $\delta\in(0,1)$ s.t.\ for a.e.\ $x\in\Omega$ and all $t\in\R$ with either $|t|<\delta$ or $|t|>\delta^{-1}$
\[|f(x,t)| \le C|t|^{p-1}.\]
Besides, by ${\bf H}_1$ \ref{h11} with $M=\delta^{-1}>0$, for a.e.\ $x\in\Omega$ and all $\delta\le |t|\le\delta^{-1}$ we have
\[|f(x,t)| \le a_M(x) \le \frac{\|a_M\|_\infty}{\delta^{p-1}}|t|^{p-1}.\]
All in all, taking $C>0$ even bigger if necessary, for a.e.\ $x\in\Omega$ and all $t\in\R$ we have
\[|f(x,t)| \le C|t|^{p-1}.\]
Now for all $n\in\N$ we have
\begin{align*}
\int_\Omega |g_n(x)|^{p'}\,dx &\le \int_\Omega\Big(\frac{C|u_n|^{p-1}}{\|u_n\|^{p-1}}\Big)^{p'}\,dx \\
&\le C\frac{\|u_n\|_p^p}{\|u_n\|^p} = C\|v_n\|_p^p,
\end{align*}
and the latter is bounded by the continuous embedding $\w\hookrightarrow L^p(\Omega)$. So we see that $(g_n)$ is a bounded sequence in $L^{p'}(\Omega)$, hence up to a further subsequence $g_n\rightharpoonup g_0$ in $L^{p'}(\Omega)$. Arguing as in Lemma \ref{big} and defining this time the sets
\[\Omega^+_{\eps,n} = \Big\{x\in\Omega:\,u_n(x)>0,\,\eta_1(x)-\eps\le\frac{f(x,u_n(x))}{u_n^{p-1}(x)}\le \eta_2(x)+\eps\Big\},\]
\[\Omega^-_{\eps,n} = \Big\{x\in\Omega:\,u_n(x)<0,\,\eta_1(x)-\eps\le\frac{f(x,u_n(x))}{|u_n(x)|^{p-2}u_n(x)}\le \eta_2(x)+\eps\Big\}\]
for all $\eps>0$, $n\in\N$, we find $\hat g_0\in L^\infty(\Omega)$ s.t.\ in $\Omega$
\beq\label{sml4}
g_0 = \hat g_0|v|^{p-2}v, \ \eta_1\le\hat g_0\le\eta_2.
\eeq
Testing \eqref{sml3} with $v_n-v\in\w$ and using the $(S)_+$-property of $A$, we see that $v_n\to v$ in $\w$, hence $\|v\|=1$. Passing to the limit in \eqref{sml3} as $n\to\infty$, we see that $v\neq 0$ solves
\beq\label{sml5}
\begin{cases}
\fpl v = \tilde g_0(x)|v|^{p-2}v & \text{in $\Omega$} \\
v = 0 & \text{in $\Omega^c$,}
\end{cases}
\eeq
where we have set for all $x\in\Omega$
\[\tilde g_0(x)=(1-t)\hat g_0(x)+t m_0(x).\]
Clearly, $\tilde g_0\in L^\infty(\Omega)$, and due to \eqref{sml4} and the choice of $m_0$ it satisfies $\eta_1\le\tilde g_0\le\eta_2$ in $\Omega$. By ${\bf H}_1$ \ref{h13}, then, we have $\lambda_1\le\tilde g_0 <\lambda_2$ in $\Omega$ and $\tilde g_0\not\equiv\lambda_1$, so from Proposition \ref{ev1} we have
\[\lambda_1(\tilde g_0) < \lambda_1(\lambda_1) = 1,\]
while by Proposition \ref{ev2} we have
\[\lambda_2(\tilde g_0) > \lambda_2(\lambda_2) = 1.\]
This is against Proposition \ref{ev2}, as problem \eqref{sml5} has no eigenvalue in the interval $(\lambda_1(\tilde g_0),\lambda_2(\tilde g_0))$. So \eqref{sml2} is proved.
\vskip2pt
\noindent
Now we can apply Proposition \ref{deg} \ref{d4} (homotopy invariance), which gives for all $r\in(0,r_0]$
\beq\label{sml6}
\deg_{(S)_+}(A-N_f,B_r(0),0) = \deg_{(S)_+}(A-K_{m_0},B_r(0),0).
\eeq
To conclude, we compute the degree of $A-K_{m_0}$. By ${\bf H}_1$ \ref{h13} we have $\lambda_1\le m_0<\lambda_2$ in $\Omega$ and $m_0\not\equiv\lambda_1$, so by Propositions \ref{ev1}, \ref{ev2} we have
\[\lambda_1(m_0) < 1 < \lambda_2(m_0).\]
Hence, by Proposition \ref{ind} \ref{ind2} we have for all $r>0$
\[
\deg_{(S)_+}(A-K_{m_0},B_r(0),0) = -1,
\]
which along with \eqref{sml6} gives for all $r\in(0,r_0]$
\[\deg_{(S)_+}(A-N_f,B_r(0),0) = -1,\]
thus concluding the proof.
\end{proof}

\noindent
Using the Lemmas above we can prove our first multiplicity result (an analogous result for the Neumann $p$-Laplacian with set-valued reactions is \cite[Theorem 3]{APS}):

\begin{theorem}\label{jmp}
If ${\bf H}_1$ holds, then problem \eqref{dir} has at least two nontrivial solutions $u_0,u_1\in C^\alpha_s(\overline\Omega)\setminus\{0\}$.
\end{theorem}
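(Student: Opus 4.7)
The plan is to combine Lemmas \ref{exi}--\ref{sml} via domain additivity and the solution property of the $(S)_+$-degree. Lemma \ref{exi} already supplies one nontrivial solution $u_0 \in C^\alpha_s(\overline\Omega)\setminus\{0\}$ together with $\deg_{(S)_+}(A-N_f, B_\rho(u_0), 0) = 1$ for all small $\rho$, so the only task is to produce a second nontrivial solution $u_1 \neq u_0$.

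I would argue by contradiction. Recall that we have already assumed, without loss of generality, that \eqref{dir} admits only finitely many solutions; assume further that $\{0, u_0\}$ is the complete solution set. Choose $R \ge R_0$ (from Lemma \ref{big}) large enough that $u_0 \in B_R(0)$, and pick $r \in (0, r_0]$, $\rho \in (0, \rho_0]$ (from Lemmas \ref{sml}, \ref{exi}) small enough so that $\overline{B_r(0)}$ and $\overline{B_\rho(u_0)}$ are disjoint and both contained in $B_R(0)$. Under the finiteness assumption it is harmless to shrink the radii slightly so that no further zero of $A-N_f$ lies on $\partial B_R(0) \cup \partial B_r(0) \cup \partial B_\rho(u_0)$. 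Setting $U = B_R(0) \setminus \big(\overline{B_r(0)} \cup \overline{B_\rho(u_0)}\big)$, the working hypothesis ensures that $A-N_f$ has no zero in $\overline U$, so Proposition \ref{deg}\ref{d5} yields $\deg_{(S)_+}(A-N_f, U, 0) = 0$.

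By iterated application of domain additivity (Proposition \ref{deg}\ref{d2}) together with Lemmas \ref{big}, \ref{sml}, and \ref{exi},
\[
1 = \deg_{(S)_+}(A-N_f, B_R(0), 0) = \deg_{(S)_+}(A-N_f, B_r(0), 0) + \deg_{(S)_+}(A-N_f, B_\rho(u_0), 0) + \deg_{(S)_+}(A-N_f, U, 0) = -1 + 1 + 0 = 0,
\]
a contradiction. Hence \eqref{dir} admits a second nontrivial solution $u_1 \in \w \setminus \{0, u_0\}$, and Proposition \ref{reg} places $u_1$ in $C^\alpha_s(\overline\Omega)$.

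There is no genuine obstacle at this final stage: the entire technical weight of the theorem is packed into Lemmas \ref{exi}--\ref{sml}, and the only bookkeeping here is the disjointness and nesting of the three balls, together with the fact that the annular region $U$ contributes zero degree, both of which are immediate from the finiteness assumption on the solution set.
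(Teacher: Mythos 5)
Your proof is correct and takes essentially the same approach as the paper: both rely on Lemmas \ref{exi}, \ref{big}, \ref{sml} and the domain additivity of $\deg_{(S)_+}$ over the three regions $B_\rho(u_0)$, $B_r(0)$, and the complementary open set $U$ inside $B_R(0)$. The only cosmetic difference is that you phrase it as a contradiction ($\deg_{(S)_+}(A-N_f,U,0)=0$ would force $1=0$), whereas the paper computes $\deg_{(S)_+}(A-N_f,U,0)=1$ directly and invokes the solution property; the two are logically equivalent.
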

\begin{proof}
First, from ${\bf H}_1$ we know that $0$ solves \eqref{dir}. From Lemma \ref{exi} we know that there exists a solution $u_0\in C^\alpha_s(\overline\Omega)\setminus\{0\}$ s.t.\ for all $\rho>0$ small enough
\[\deg_{(S)_+}(A-N_f,B_\rho(u_0),0) = 1.\]
Besides, from Lemma \ref{big} we know that for all $R>0$ big enough
\[\deg_{(S)_+}(A-N_f,B_R(0),0) = 1,\]
and from Lemma \ref{sml} that for all $r>0$ small enough
\[\deg_{(S)_+}(A-N_f,B_r(0),0) = -1.\]
Choosing $\rho,r>0$ even smaller and $R>0$ bigger if necessary, we can ensure
\[\overline B_\rho(u_0)\cup\overline B_r(0) \subset B_R(0), \ \overline B_\rho(u_0)\cap\overline B_r(0) = \emptyset.\]
Besides, by our standing assumption that $A-N_f$ vanishes at finitely many points, we can find $\rho,r>0$ s.t.\ $A(u)-N_f(u)\neq 0$ for all $u\in\partial B_\rho(u_0)\cup\partial B_r(0)$. So, by Proposition \ref{deg} \ref{d2} (domain additivity) we have
\begin{align*}
\deg_{(S)_+}(A-N_f,B_R(0),0) &= \deg_{(S)_+}(A-N_f,B_\rho(u_0),0)+\deg_{(S)_+}(A-N_f,B_r(0),0) \\
&+ \deg_{(S)_+}(A-N_f,B_R(0)\setminus\overline{(B_\rho(u_0)\cup B_r(0))},0),
\end{align*}
which amounts to
\[\deg_{(S)_+}(A-N_f,B_R(0)\setminus\overline{(B_\rho(u_0)\cup B_r(0))},0) = 1.\]
By Proposition \ref{deg} \ref{d5} (solution property), there exists $u_1\in B_R(0)\setminus\overline{(B_\rho(u_0)\cup B_r(0))}$ s.t.\ in $W^{-s,p'}(\Omega)$
\[A(u_1)-N_f(u_1) = 0.\]
By Proposition \ref{reg}, finally, we conclude that $u_1\in C^\alpha_s(\overline\Omega)\setminus\{0,u_0\}$ is a second solution of \eqref{dir}.
\end{proof}

\begin{remark}
The proof of Theorem \ref{jmp} can be performed using the properties of the degree in different ways, for instance exploiting Proposition \ref{deg} \ref{d3} (excision property).
\end{remark}

\section{Two nontrivial solutions for doubly asymmetric reactions}\label{sec5}

\noindent
In this section we study problem \eqref{dir} under different hypotheses, implying an asymmetric 'jump', from above the principal eigenvalue to below between $0$ and $\infty$ and vice versa between $0$ and $-\infty$:

\begin{itemize}[leftmargin=1cm]
\item[${\bf H}_2$] $f:\Omega\times\R\to\R$ is a Carath\'{e}odory mapping satisfying
\begin{enumroman}
\item\label{h21} for all $M>0$ there exists $a_M \in L^{\infty}(\Omega)_+$ s.t.\ for a.e. $x \in \Omega$ and $|t| \leq M$
\[|f(x,t)| \leq a_M(x);\]
\item\label{h22} there exist $\theta_1, \theta_2 \in L^{\infty}(\Omega)$ s.t.\ $\theta_1 \leq \theta_2\leq \lambda_1$ in $\Omega$, $\theta_2 \not\equiv \lambda_1$, and uniformly for a.e. $x \in \Omega$
\[\theta_1(x) \leq \liminf_{t \to \infty}\frac{f(x,t)}{t^{p-1}} \leq \limsup_{t \to \infty}\frac{f(x,t)}{t^{p-1}} \leq \theta_2(x);\] 
\item\label{h23} there exist $\eta_1, \eta_2 \in L^{\infty}(\Omega)$ s.t.\ $\lambda_1 \leq \eta_1 \leq \eta_2$ in $\Omega$, $\eta_1 \not\equiv \lambda_1$, 
and uniformly for a.e. $x \in \Omega$
\[\eta_1(x) \leq \liminf_{t \to 0^+} \frac{f(x,t)}{t^{p-1}} \leq \limsup_{t \to 0^+}\frac{f(x,t)}{t^{p-1}} \leq \eta_2(x);\]
\item\label{h24} there exist $\xi_1, \xi_2 \in L^{\infty}(\Omega)$ s.t.\ $\lambda_1\leq \xi_1 \leq \xi_2$ in $\Omega$, $\xi_1 \not\equiv \lambda_1$, 
and uniformly for a.e. $x \in \Omega$
\[\xi_1(x) \leq \liminf_{t \to -\infty}\frac{f(x,t)}{|t|^{p-2}t} \leq \limsup_{t \to - \infty}\frac{f(x,t)}{|t|^{p-2}t} \leq \xi_2(x);\]
\item\label{h25} uniformly for a.e. $x \in \Omega$
\[\lim_{t \to 0^-}\frac{f(x,t)}{|t|^{p-2}t} =0.\]
\end{enumroman}
\end{itemize}

\noindent
Hypotheses ${\bf H}_2$ conjure a doubly asymmetric behavior of $f(x,\cdot)$, which is bounded below $\lambda_1$ at $\infty$, bounded above $\lambda_1$ both at $0^+$ and at $-\infty$ (without resonance on a positive measure subset of $\Omega$), while we assume that it is $(p-1)$-superlinear at $0^-$. Clearly ${\bf H}_2$ implies ${\bf H}_0$ (with $q=p$), so all the results of Section \ref{sec3} apply. As in Section \ref{sec4}, ${\bf H}_2$ \ref{h23} \ref{h25} imply that \eqref{dir} admits the trivial solution $u=0$. Without loss of generality, we may assume that \eqref{dir} has only {\em finitely many} solutions.

\begin{example}
The autonomous mapping $f \in C(\R)$ defined by
\[f(t)=
\begin{cases}
\eta \, t^{p-1} + \frac{2}{\pi}(\theta - \eta) t^{p-1} \arctan(t) &\text{if $t >0$}\\
\xi \,  \frac{2}{\pi}|t|^{p-2} t \arctan(-t) &\text{if $t \leq 0$,} 
\end{cases}\]
with $\theta < \lambda_1$ and $\xi, \eta > \lambda_1$, satisfies ${\bf H}_2$.
\end{example}

\noindent
Dealing with this case, we need to introduce truncated reactions, along with the corresponding operators and functionals. So we set for all $(x,t)\in\Omega\times\R$
\[f_\pm(x,t) = f(x,\pm t^\pm), \ F_\pm(x,t) = \int_0^t f_\pm(x,\tau)\,d\tau.\]
Further, define the completely continuous maps $N^\pm_f:\w\to W^{-s,p'}(\Omega)$ by setting for all $u,v\in\w$
\[\langle N^\pm_f(u),v\rangle = \int_\Omega f_\pm(x,u)v\,dx,\]
and the functionals $\Phi_\pm\in C^1(\w)$ by setting for all $u\in\w$
\[\Phi_\pm(u) = \frac{\|u\|^p}{p}-\int_\Omega F_\pm(x,u)\,dx,\]
satisfying $\Phi'_\pm=A-N^\pm_f$. Finally, for any $m\in L^\infty(\Omega)_+\setminus\{0\}$ we define two completely continuous maps $K^\pm_m:\w\to W^{-s,p'}(\Omega)$ by setting for all $u,v\in\w$
\[\langle K^\pm_m(u),v\rangle = \pm\int_\Omega m(x)(u^\pm)^{p-1}v\,dx.\]
Our first existence results bears a sign information this time:

\begin{lemma}\label{aexi}
If ${\bf H}_2$ holds, then \eqref{dir} has a solution $u_0 \in C_s^{\alpha}(\overline{\Omega})\cap{\rm int}(C^0_s(\overline\Omega)_+)$. Moreover, there exists $\rho_0>0$ s.t.\ for all $\rho\in(0,\rho_0]$
\[\deg_{(S)_+}(A-N_f, B_{\rho}(u_0),0)=1.\]
\end{lemma}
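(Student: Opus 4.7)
The plan is to mirror the proof of Lemma \ref{exi}, but working with the truncated functional $\Phi_+$, then show that the resulting minimizer is nonnegative and in fact lies in the interior of the positive cone, so that it becomes a local minimizer of $\Phi$ itself.

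First I would check that $\Phi_+$ is coercive on $\w$. Combining ${\bf H}_2$ \ref{h21} and \ref{h22} exactly as in the proof of Lemma \ref{exi}, for every small $\eps>0$ I get $F(x,t)\le(\theta_2(x)+\eps)t^p/p+C$ for all $t\ge 0$, hence $F_+(x,u)\le(\theta_2(x)+\eps)(u^+)^p/p+C$ for all $u\in\w$. By \eqref{pm} combined with H\"older, $\|u^+\|^p\le\langle A(u),u^+\rangle\le\|u\|^{p-1}\|u^+\|$, so $\|u^+\|\le\|u\|$. Applying Proposition \ref{coer} to $u^+$ with $\theta=\theta_2$ and using $\|u^+\|_p^p\le\|u^+\|^p/\lambda_1$, with $\eps<\sigma\lambda_1$ I obtain $\Phi_+(u)\ge\sigma'\|u\|^p/p-C$ for some $\sigma'>0$. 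Weak lower semicontinuity then furnishes a global minimizer $u_0\in\w$ of $\Phi_+$.

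Second, I would check that the minimum is strictly negative by testing with $\tau\hat u_1$ as in Lemma \ref{exi}: ${\bf H}_2$ \ref{h23} and $\eta_1\not\equiv\lambda_1$ give $\int_\Omega\eta_1\hat u_1^p\,dx>\lambda_1$, so for $\tau>0$ small enough $\Phi_+(\tau\hat u_1)<0$ and therefore $u_0\ne 0$. To see that $u_0\ge 0$, I would test $A(u_0)=N^+_f(u_0)$ with $-u_0^-$: since $u_0^+u_0^-\equiv 0$ and $f(x,0)=0$, we have $f(x,u_0^+)u_0^-\equiv 0$, whence $\langle A(u_0),-u_0^-\rangle=0$; by \eqref{pm} this gives $\|u_0^-\|=0$. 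Because $f_+(x,u_0)=f(x,u_0)$ on $\{u_0\ge 0\}$, $u_0$ is a (weak) solution of \eqref{dir}, and Proposition \ref{reg} places it in $C^\alpha_s(\overline\Omega)$.

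The main technical hurdle will be the application of Proposition \ref{max}, which requires the one-sided bound $f(x,t)\ge -c_1 t^{p-1}$ for all $t\ge 0$. I would establish this by splitting into three regimes: ${\bf H}_2$ \ref{h23} makes $f(x,t)\ge(\eta_1(x)-\eps)t^{p-1}\ge 0$ for $t\in(0,\delta]$; on the bounded interval $[\delta,M]$ the estimate $|f|\le a_M$ from ${\bf H}_2$ \ref{h21} rewrites as $f(x,t)\ge -\|a_M\|_\infty\delta^{1-p}t^{p-1}$; and ${\bf H}_2$ \ref{h22} yields $f(x,t)\ge(\theta_1(x)-\eps)t^{p-1}\ge -Ct^{p-1}$ for $t\ge M$. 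Patching these gives the required global bound, so that Proposition \ref{max} upgrades $u_0$ to $u_0\in\mathrm{int}(C_s^0(\overline\Omega)_+)$.

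Finally, since $u_0$ lies in the $C^0_s$-interior of the positive cone, any $v\in\w\cap C^0_s(\overline\Omega)$ with sufficiently small $\|v\|_{0,s}$ satisfies $u_0+v\ge 0$, whence $\Phi(u_0+v)=\Phi_+(u_0+v)\ge\Phi_+(u_0)=\Phi(u_0)$. Thus $u_0$ is a $C^0_s$-local minimizer of $\Phi$, and Proposition \ref{svh} promotes it to a $\w$-local minimizer. Under the standing finiteness assumption on the solutions of \eqref{dir}, $u_0$ is an isolated critical point of $\Phi$, so Proposition \ref{degmin} provides the $\rho_0>0$ with $\deg_{(S)_+}(A-N_f,B_\rho(u_0),0)=1$ for all $\rho\in(0,\rho_0]$.
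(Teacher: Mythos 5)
Your proposal is correct and follows essentially the same route as the paper: minimize the truncated functional $\Phi_+$, show the minimizer is nonnegative via \eqref{pm}, upgrade to ${\rm int}(C^0_s(\overline\Omega)_+)$ via Propositions \ref{reg} and \ref{max}, then pass from a $C^0_s$-local minimizer to a $\w$-local minimizer with Proposition \ref{svh} and invoke Proposition \ref{degmin}. A nice touch is that you explicitly verify the one-sided bound $f(x,t)\ge -c_1 t^{p-1}$ for $t\ge 0$ needed by Proposition \ref{max}, a step the paper leaves implicit.
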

\begin{proof}
We closely follow the argument of Lemma \ref{exi}. Using ${\bf H}_2$ \ref{h21} \ref{h22} and Proposition \ref{coer} we prove that $\Phi_+$ is coercive in $\w$. Besides, it is sequentially weakly l.s.c. Thus, there exists $u_0\in\w$ s.t.\
\beq\label{aexi1}
\Phi_+(u_0) = \inf_{u\in\w}\Phi_+(u_0) =: \mu^+_0.
\eeq
Then, using ${\bf H}_2$ \ref{h23}, we see that $\mu^+_0<0$, hence $u_0\neq 0$. By \eqref{aexi1}, we have $\Phi'_+(u_0)=0$ in $W^{-s,p'}(\Omega)$, i.e., for all $v\in\w$
\beq\label{aexi2}
\langle A(u_0),v\rangle = \int_\Omega f_+(x,u_0)v\,dx.
\eeq
Testing \eqref{aexi2} with $-u_0^-\in\w$, by \eqref{pm} we have
\begin{align*}
\|u_0^-\|^p &\le \langle A(u_0),-u^-_0\rangle \\
&= \int_\Omega f_+(x,u_0)(-u_0^-)\,dx = 0.
\end{align*}
so $u_0\ge 0$ in $\Omega$. Therefore, \eqref{aexi2} rephrases as \eqref{dir}. Since $u_0\in\w_+\setminus\{0\}$ solves \eqref{dir}, by Propositions \ref{reg}, \ref{max} we have $u_0 \in C_s^{\alpha}(\overline{\Omega})\cap{\rm int}(C^0_s(\overline\Omega)_+)$.
\vskip2pt
\noindent
Since $\Phi=\Phi_+$ in $\w_+$, from \eqref{aexi1} we see that $u_0\in{\rm int}(C^0_s(\overline\Omega)_+)$ is a local minimizer of $\Phi$ in $C^0_s(\overline\Omega)$. So, by Proposition \ref{svh}, it is as well a local minimizer of $\Phi$ in $\w$. By our standing assumption that $\Phi$ has only finitely many critical points, $u_0$ is an isolated critical point of $\Phi$. So, by Proposition \ref{degmin}, there exists $\rho_0>0$ s.t.\ for all $\rho\in(0,\rho_0]$
\[\deg_{(S)_+}(A-N_f,B_\rho(u_0),0) = 1,\]
which concludes the proof.
\end{proof}

\noindent
Again we study the asymptotic behavior of $A-N_f$, which mainly relies on the growth of $f(x,\cdot)$ at $-\infty$:

\begin{lemma}\label{abig}
If ${\bf H}_2$ holds, then there exists $R_0>0$ s.t.\ for all $R\ge R_0$
\[\deg_{(S)_+}(A-N_f,B_R(0),0) = 0.\]
\end{lemma}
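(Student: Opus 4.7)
I follow the strategy of Lemma \ref{big}: build an $(S)_+$-homotopy from $A-N_f$ to a tractable auxiliary operator whose degree is zero, here because it admits no zeros in $\w$. Pick a weight $\xi^*\in L^\infty(\Omega)_+$ with $\xi_1\le\xi^*\le\xi_2$ in $\Omega$ --- so $\xi^*\ge\lambda_1$ and $\xi^*\not\equiv\lambda_1$, whence $\lambda_1(\xi^*)<1$ by Proposition \ref{ev1} --- a perturbation $\beta\in L^\infty(\Omega)_+\setminus\{0\}$, and set
\[h_\infty(t,u)=A(u)-(1-t)N_f(u)-tK^+_{\xi^*}(u)-t\beta,\qquad (t,u)\in[0,1]\times\w.\]
This is an $(S)_+$-homotopy, and the plan is to apply Proposition \ref{deg} \ref{d4} to transfer the degree computation from $t=0$ to $t=1$.

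If $u$ solved $h_\infty(1,u)=0$, testing with $-u^-\in\w$ and using \eqref{pm} would give
\[\|u^-\|^p\le\langle A(u),-u^-\rangle=-\int_\Omega\xi^*(x)(u^+)^{p-1}u^-\,dx-\int_\Omega\beta u^-\,dx\le 0,\]
whence $u\in\w_+$ and the equation would reduce to $\fpl u=\xi^*(x)u^{p-1}+\beta$; Lemma \ref{amp} with $m=\xi^*$ and $\lambda=1\ge\lambda_1(\xi^*)$ would then force $u^-\not\equiv 0$, a contradiction. Hence $h_\infty(1,\cdot)$ has no zeros in $\w$, and $\deg_{(S)_+}(h_\infty(1,\cdot),B_R(0),0)=0$ for every $R>0$.

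The remaining ingredient is the homotopy validity: I claim that for some $R_0>0$ one has $h_\infty(t,u)\ne 0$ whenever $t\in[0,1]$ and $\|u\|\ge R_0$. Supposing not, extract $(t_n,u_n)$ with $h_\infty(t_n,u_n)=0$, $\|u_n\|\to\infty$ and $t_n\to t$; normalise $v_n=u_n/\|u_n\|$ and, using the $(p-1)$-homogeneity of $A$ and $K^+_{\xi^*}$, divide by $\|u_n\|^{p-1}$ to get
\[A(v_n)=(1-t_n)g_n+t_n K^+_{\xi^*}(v_n)+o(1),\qquad g_n:=\frac{N_f(u_n)}{\|u_n\|^{p-1}}.\]
Mimicking the bootstrap of Lemma \ref{big}, hypotheses ${\bf H}_2$ \ref{h21} \ref{h22} \ref{h24} imply that $(g_n)$ is bounded in $L^{p'}(\Omega)$ and $g_n\rightharpoonup\hat g_\infty|v|^{p-2}v$ with $\hat g_\infty\in L^\infty(\Omega)$ satisfying the sign-asymmetric bounds $\theta_1\le\hat g_\infty\le\theta_2$ on $\{v>0\}$ and $\xi_1\le\hat g_\infty\le\xi_2$ on $\{v<0\}$. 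The $(S)_+$-property of $A$ then yields $v_n\to v$ strongly, $\|v\|=1$, and
\[A(v)=(1-t)\hat g_\infty(x)|v|^{p-2}v+t\xi^*(x)(v^+)^{p-1}.\]
Splitting into the sign cases of $v$ and testing with $v^+$ and $-v^-$, one combines the monotonicity of $\lambda_1(\cdot)$ (Proposition \ref{ev1}) with that of $\lambda_2(\cdot)$ (Proposition \ref{ev2}) to conclude $v=0$, contradicting $\|v\|=1$. Proposition \ref{deg} \ref{d4} then delivers $\deg_{(S)_+}(A-N_f,B_R(0),0)=\deg_{(S)_+}(h_\infty(1,\cdot),B_R(0),0)=0$ for all $R\ge R_0$.

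\emph{Main obstacle.} The hardest step is ruling out the blown-up case $v\le 0$, $v\not\equiv 0$: here the $\xi^*$-term collapses out of the limit equation, and $w=-v\in{\rm int}(C^0_s(\overline\Omega)_+)$ becomes a positive first eigenfunction of the weight $(1-t)\hat g_\infty$ with $\hat g_\infty\ge\xi_1\ge\lambda_1$ and $\xi_1\not\equiv\lambda_1$. Direct $\lambda_1$-monotonicity only yields $\lambda_1(\hat g_\infty)<1$, which excludes $t=0$, while at $t=1$ the whole right-hand side vanishes and forces $v=0$; for intermediate $t\in(0,1)$ the required identity $\lambda_1((1-t)\hat g_\infty)=1$ is not ruled out by $\lambda_1$-monotonicity alone, and one must invoke the sharper comparison of $\lambda_2(\cdot)$ from Proposition \ref{ev2}, together with the fact that a \emph{first} eigenfunction of $(1-t)\hat g_\infty$ cannot coexist with the upper bound $\hat g_\infty\le\xi_2$, to close the argument.
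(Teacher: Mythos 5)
Your proposal does not close the argument, and the gap you flag at the end is genuine. Let me explain why, and how the paper avoids it.

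The paper's choice of auxiliary operator is $K^-_{m_\infty}$ with $\xi_1\le m_\infty\le\xi_2$, i.e.\ the \emph{negative-part} truncation, because the homotopy must match the asymptotic behavior of $N_f$ as $\|u\|\to\infty$, and the crucial structural fact (established first, via testing with $u_n^+$ and Proposition \ref{coer} applied to $\theta_2\le\lambda_1$, $\theta_2\not\equiv\lambda_1$) is that along a putative blow-up sequence $(u_n^+)$ stays \emph{bounded}, so only the negative part escapes. This gives $v\le 0$ automatically, and then the limit weight belongs to $[\xi_1,\xi_2]$ for \emph{every} $t\in[0,1]$ (since both $N_f$ and $K^-_{m_\infty}$ have that asymptotic behavior on the negative part), so $\lambda_1$-monotonicity yields a clean contradiction. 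With your $K^+_{\xi^*}$ (positive-part truncation with a weight $\xi^*\ge\lambda_1$), testing $h_\infty(t_n,u_n)=0$ with $u_n^+$ gives
\[\|u_n^+\|^p\le(1-t_n)\int_\Omega(\theta_2+\eps)(u_n^+)^p\,dx+t_n\int_\Omega\xi^*(u_n^+)^p\,dx+C\|u_n^+\|_1,\]
and the $t_n\xi^*$-term is \emph{not} coercive (since $\xi^*\ge\lambda_1$), so you lose the bound on $(u_n^+)$ and hence the key fact $v\le 0$; your "main argument" correctly registers that $v$ may then change sign, and the resulting nodal limit equation has no tool to exclude it. Even in the case $v\le 0$, as you note, the weight $(1-t)\hat g_\infty$ on $\{v<0\}$ lies on \emph{neither} side of $\lambda_1$ for intermediate $t\in(0,1)$, and invoking $\lambda_2$-monotonicity cannot help here: $-v$ is a signed (hence first) eigenfunction, so $\lambda_2$ never enters, and there is nothing preventing $\lambda_1((1-t)\hat g_\infty)=1$ for some $t$. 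The claim that "a first eigenfunction of $(1-t)\hat g_\infty$ cannot coexist with $\hat g_\infty\le\xi_2$" is simply false. Finally, folding the perturbation $\beta$ into the same homotopy from the start also contaminates the a priori estimate; the paper separates this into two homotopies precisely so that at infinity the estimate only involves $N_f$ and $K^-_{m_\infty}$, and the $\beta$-perturbation is then handled via Lemma \ref{amp} on a fixed ball. Your argument at $t=1$ via Lemma \ref{amp} is correct in isolation, but it is the piece matching the paper's Lemma \ref{asml} (the near-zero case), not this lemma.
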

\begin{proof}
Fix $m_\infty\in L^\infty(\Omega)_+$ s.t.\ $\xi_1\le m_\infty\le\xi_2$ in $\Omega$, and define $K_{m_\infty}^-:\w\to W^{-s,p'}(\Omega)$ as above. The first part of the proof follows that of Lemma \ref{big}. We define a $(S)_+$-homotopy $h^-_\infty:[0,1]\times\w\to W^{-s,p'}(\Omega)$ by setting for all $(t,u)\in[0,1]\times\w$
\[h^-_\infty(t,u) = A(u)-(1-t)N_f(u)-tK_{m_\infty}^-(u).\]
We claim that there exists $R_0>0$ s.t.\
\beq\label{abig1}
h^-_\infty(t,u) \neq 0 \ \text{for all $t\in[0,1]$, $\|u\|\ge R_0$.}
\eeq
Arguing by contradiction, assume that there exist sequences $(t_n)$ in $[0,1]$, $(u_n)$ in $\w$ s.t.\ $\|u_n\|\to\infty$ and for all $n\in\N$ we have $h^-_\infty(t_n,u_n)=0$ in $W^{-s,p'}(\Omega)$, i.e.,
\beq\label{abig2}
A(u_n) = (1-t_n)N_f(u_n)+t_nK^-_{m_\infty}(u_n).
\eeq
By ${\bf H}_2$ \ref{h22} and Proposition \ref{coer}, there exists $\sigma>0$ s.t.\ for all $u\in\w$
\[\|u\|^p-\int_\Omega\theta_2(x)|u|^p\,dx \ge \sigma\|u\|^p.\]
Fix $\eps\in(0,\sigma\lambda_1)$. Then, by ${\bf H}_2$ \ref{h22} we can find $M>0$ s.t.\ for a.e.\ $x\in\Omega$ and all $t\ge M$
\[f(x,t) \le (\theta_2(x)+\eps)t^{p-1}.\]
Also, for a.e.\ $x\in\Omega$ and all $t\in[0,M]$ we have by ${\bf H}_2$ \ref{h21}
\[f(x,t) \le a_M(x).\]
All in all, we can find $C>0$ s.t.\ for a.e.\ $x\in\Omega$ and all $t\ge 0$
\[f(x,t) \le (\theta_2(x)+\eps)t^{p-1}+C.\]
By applying \eqref{pm} and testing \eqref{abig2} with $u^+_n\in\w_+$, we get for all $n\in\N$
\begin{align*}
\|u^+_n\|^p &\le \langle A(u_n),u^+_n\rangle \\
&= (1-t_n)\int_\Omega f(x,u_n)u^+_n\,dx-t_n\int_\Omega m_\infty(x)(u^-_n)^{p-1}u^+_n\,dx \\
&\le \int_\Omega(\theta_2(x)+\eps)(u^+_n)^p\,dx+C\|u^+_n\|_1,
\end{align*}
which, along with the continuous embedding $\w\hookrightarrow L^1(\Omega)$, implies
\[\Big(\sigma-\frac{\eps}{\lambda_1}\Big)\|u^+_n\|^p \le C\|u^+_n\|.\]
Hence $(u^+_n)$ is bounded in $\w$. Besides, by the triangle inequality we have for all $n\in\N$
\[\|u^-_n\| \ge \|u_n\|-\|u^+_n\|,\]
and the latter tends to $\infty$ as $n\to\infty$. So we have $\|u^-_n\|\to\infty$. Passing if necessary to a subsequence, we have $t_n\to t$ and $\|u_n\|>0$. Set for all $n\in\N$
\[v_n = \frac{u_n}{\|u_n\|}.\]
Since $(v_n)$ is bounded in $\w$, passing to a further subsequence we have $v_n\rightharpoonup v$ in $\w$, $v_n\to v$ in $L^p(\Omega)$, and $v_n(x)\to v(x)$ for a.e.\ $x\in\Omega$. Note that
\[\|v^+_n\| = \frac{\|u^+_n\|}{\|u_n\|} \to 0\]
as $n\to\infty$. So, for a.e.\ $x\in\Omega$ we have
\[v^-_n(x) = v^+_n(x)-v_n(x) \to -v(x),\]
which implies $v(x)\le 0$ in $\Omega$. Dividing \eqref{abig2} by $\|u_n\|^{p-1}$ we have for all $n\in\N$
\beq\label{abig3}
A(v_n) = (1-t_n)g_n+t_nK^-_{m_\infty}(v_n),
\eeq
where we have set for all $x\in\Omega$
\[g_n(x) = \frac{f(x,u_n(x))}{\|u_n\|^{p-1}}.\]
Reasoning as in Lemma \ref{big} we see that $(g_n)$ is bounded in $L^{p'}(\Omega)$, hence, passing to a subsequence, $g_n\rightharpoonup g_\infty$ in $L^{p'}(\Omega)$. We will now prove that there exists $\hat g_\infty\in L^\infty(\Omega)$ s.t.\ in $\Omega$
\beq\label{abig4}
g_\infty = \hat g_\infty|v|^{p-2}v, \ \xi_1\le\hat g_\infty\le\xi_2.
\eeq
Indeed, recall that $v\le 0$ in $\Omega$. Set
\[\Omega^- = \big\{x\in\Omega:\,v(x)<0\big\}, \ \Omega^0 = \big\{x\in\Omega:\,v(x)=0\big\}.\]
Then, for all $\eps>0$, $n\in\N$ set
\[\Omega^-_{\eps,n} = \Big\{x\in\Omega:\,u_n(x)<0,\,\xi_1(x)-\eps\le\frac{f(x,u_n(x))}{|u_n(x)|^{p-2}u_n(x)}\le\xi_2(x)+\eps\Big\}.\]
By ${\bf H}_2$ \ref{h24} we have $\chi_{\Omega^-_{\eps,n}}\to 1$ in $\Omega^-$ with bounded convergence, hence $\chi_{\Omega^-_{\eps,n}}g_n\rightharpoonup g_\infty$ in $L^{p'}(\Omega^-)$. Besides, by definition of $v_n$, for all $\eps>0$ and all $n\in\N$ big enough, in $\Omega^-$ we have $v_n<0$ and
\[\chi_{\Omega^-_{\eps,n}}(\xi_2+\eps)|v_n|^{p-2}v_n \le \chi_{\Omega^-_{\eps,n}}g_n \le \chi_{\Omega^-_{\eps,n}}(\xi_1-\eps)|v_n|^{p-2}v_n.\]
Passing to the limit as $n\to\infty$ and $\eps\to 0^+$, we get in $\Omega^-$
\[\xi_2|v|^{p-2}v \le g_\infty \le \xi_1|v|^{p-2}v.\]
Similarly, we get $g_\infty=0$ in $\Omega^0$, which completes the argument for \eqref{abig4}.
\vskip2pt
\noindent
Now we test \eqref{abig3} with $v_n-v\in\w$, so we get for all $n\in\N$
\[\langle A(v_n),v_n-v\rangle = (1-t_n)\int_\Omega g_n(x)(v_n-v)\,dx-t_n\int_\Omega m_\infty(x)(v_n^-)^{p-1}(v_n-v)\,dx.\]
The latter tends to $0$ as $n\to\infty$, so by the $(S)_+$-property of $A$ we have $v_n\to v$ in $\w$, hence in particular $\|v\|=1$. Passing to the limit in \eqref{abig3} as $n\to\infty$ and using \eqref{abig4}, we see that $v$ solves the weighted eigenvalue problem
\beq\label{abig5}
\begin{cases}
\fpl v = \tilde g_\infty(x)|v|^{p-2}v & \text{in $\Omega$} \\
v = 0 & \text{in $\Omega^c$,}
\end{cases}
\eeq
where we have set for all $x\in\Omega$
\[\tilde g_\infty(x) = (1-t)\hat g_\infty(x)+tm_\infty(x).\]
Clearly $\tilde g_\infty\in L^\infty(\Omega)$, in addition by \eqref{abig4} and the choice of $m_\infty$ we have $\xi_1\le\tilde g_\infty\le\xi_2$ in $\Omega$, hence by ${\bf H}_2$ \ref{h24} $\tilde g_\infty\ge\lambda_1$ in $\Omega$ with $\tilde g_\infty\not\equiv\lambda_1$. By Proposition \ref{ev1} we have
\[\lambda_1(\tilde g_\infty) < \lambda_1(\lambda_1) = 1.\]
So, $v$ is a non-principal eigenfunction of \eqref{abig5}, hence nodal (again by Proposition \ref{ev1}), against $v\le 0$ in $\Omega$. The contradiction proves \eqref{abig1}.
\vskip2pt
\noindent
We can now apply Proposition \ref{deg} \ref{d4} (homotopy invariance) and get for all $R\ge R_0$
\beq\label{abig6}
\deg_{(S)_+}(A-N_f,B_R(0),0) = \deg_{(S)_+}(A-K^-_{m_\infty},B_R(0),0).
\eeq
So we are led to computing the degree of $A-K^-_{m_\infty}$, which this time cannot be done directly. Fix $\beta_\infty\in L^\infty(\Omega)_+\setminus\{0\}$, and for all $(t,u)\in[0,1]\times\w$ set
\[\hat h_\infty(t,u) = A(u)-K^-_{m_\infty}(u)+t\beta_\infty\]
(here we identify $\beta_\infty$ with an element of $W^{-s,p'}(\Omega)$). Clearly $\hat h_\infty:[0,1]\times\w\to W^{-s,p'}(\Omega)$ is a $(S)_+$-homotopy. We claim that for all $t\in[0,1]$ and all $u\in\w\setminus\{0\}$
\beq\label{abig7}
\hat h_\infty(t,u)\neq 0.
\eeq
Arguing by contradiction, let $t\in[0,1]$, $u\in\w\setminus\{0\}$ be s.t.\ in $W^{-s,p'}(\Omega)$
\beq\label{abig8}
A(u) = K^-_{m_\infty}(u)-t\beta_\infty.
\eeq
We distinguish two cases:
\begin{itemize}[leftmargin=1cm]
\item[$(a)$] If $t=0$, then \eqref{abig8} rephrases as
\[A(u) = K^-_{m_\infty}(u).\]
Testing with $u^+\in\w$ and applying \eqref{pm}, we have
\begin{align*}
\|u^+\|^p &\le \langle A(u),u^+\rangle \\
&= -\int_\Omega m_\infty(x)(u^-)^{p-1}u^+\,dx = 0,
\end{align*}
so $u\le 0$ in $\Omega$. Then $u$ solves in fact
\[\begin{cases}
\fpl u = m_\infty(x)|u|^{p-2}u & \text{in $\Omega$} \\
u = 0 & \text{in $\Omega^c$.}
\end{cases}\]
By ${\bf H}_2$ \ref{h24} and the choice of $m_\infty$ we have $m_\infty\ge\lambda_1$ in $\Omega$ and $m_\infty\not\equiv\lambda_1$, hence
\[\lambda_1(m_\infty) < \lambda_1(\lambda_1) = 1.\]
So, $u\in -\w_+\setminus\{0\}$ is a non-principal eigenfunction with weight $m_\infty$, hence nodal by Proposition \ref{ev1}, a contradiction. 
\item[$(b)$] If $t\in(0,1]$, then testing \eqref{abig8} with $u^+\in\w_+$ and applying \eqref{pm} we have
\begin{align*}
\|u^+\|^p &\le \langle A(u),u^+\rangle \\
&= -\int_\Omega m_\infty(x)(u^-)^{p-1}u^+\,dx-t\int_\Omega\beta(x)u^+\,dx \le 0,
\end{align*}
so again $u\le 0$ in $\Omega$. Then, $-u\in\w_+$ satisfies
\[\begin{cases}
\fpl(-u) = m_\infty(x)(-u)^{p-1}+t\beta_\infty(x) & \text{in $\Omega$} \\
-u = 0 & \text{in $\Omega^c$.}
\end{cases}\]
As above $\lambda_1(m_\infty)<1$, so this violates Lemma \ref{amp}.
\end{itemize}
In both cases we reach a contradiction, thus proving \eqref{abig7}. Again by Proposition \ref{deg} \ref{d4} (homotopy invariance) we have for all $R>0$
\beq\label{abig9}
\deg_{(S)_+}(A-K^-_{m_\infty},B_R(0),0) = \deg_{(S)_+}(A-K^-_{m_\infty}+\beta_\infty,B_R(0),0).
\eeq
Finally, for all $R>0$ we have
\beq\label{abig10}
\deg_{(S)_+}(A-K^-_{m_\infty}+\beta_\infty,B_R(0),0) = 0.
\eeq
Arguing by contradiction, assume that the degree above does not vanish for some $R>0$. Then, by Proposition \ref{deg} \ref{d5} (solution property) there exists $u\in B_R(0)$ s.t.\ in $W^{-s,p'}(\Omega)$
\[A(u) = K^-_{m_\infty}(u)-\beta_\infty,\]
namely,
\[\begin{cases}
\fpl u = -m_\infty(x)(u^-)^{p-1}-\beta_\infty(x) & \text{in $\Omega$} \\
u = 0 & \text{in $\Omega^c$.}
\end{cases}\]
Arguing as in case $(b)$ above we see that $u\le 0$ in $\Omega$ and reach a contradiction to Lemma \ref{amp}, thus proving \eqref{abig10}.
\vskip2pt
\noindent
Now, concatenating \eqref{abig6}, \eqref{abig9}, and \eqref{abig10}, we have for all $R\ge R_0$
\[\deg_{(S)_+}(A-N_f,B_R(0),0) = 0,\]
thus concluding the proof.
\end{proof}

\noindent
We complete the picture by studying the behavior of $A-N_f$ near $0$, which is affected by the behavior of $f(x,\cdot)$ near $0^+$:

\begin{lemma}\label{asml}
If ${\bf H}_2$ holds, then there exists $r_0>0$ s.t.\ for all $r\in(0,r_0]$
\[\deg_{(S)_+}(A-N_f,B_r(0),0) = 0.\]
\end{lemma}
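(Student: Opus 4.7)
The plan is to mirror the proof of Lemma \ref{abig} in a dual fashion, now with the behavior of $f$ at $0^\pm$ playing the role previously played by its behavior at $\pm\infty$. Fix $m_0\in L^\infty(\Omega)_+$ with $\eta_1\le m_0\le\eta_2$ in $\Omega$, and consider the $(S)_+$-homotopy
\[h_0^+(t,u)=A(u)-(1-t)N_f(u)-tK_{m_0}^+(u).\]
The goal is to find $r_0>0$ such that $h_0^+(t,u)\ne 0$ for all $t\in[0,1]$ and $0<\|u\|\le r_0$; homotopy invariance then reduces the computation to $\deg_{(S)_+}(A-K_{m_0}^+,B_r(0),0)$, which in turn is handled by a second perturbation homotopy using Lemma \ref{amp}.

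For the first step I argue by contradiction: suppose $(t_n)\subset[0,1]$, $u_n\to 0$ in $\w\setminus\{0\}$ with $h_0^+(t_n,u_n)=0$, normalize $v_n=u_n/\|u_n\|$ (up to subsequence $v_n\rightharpoonup v$ in $\w$), and analyze
\[A(v_n)=(1-t_n)g_n+t_nK_{m_0}^+(v_n),\qquad g_n(x)=\frac{f(x,u_n(x))}{\|u_n\|^{p-1}}.\]
As in Lemma \ref{sml}, the hypotheses ${\bf H}_2$ \ref{h21}--\ref{h25} jointly yield a global growth bound $|f(x,t)|\le C|t|^{p-1}$, so $(g_n)$ is bounded in $L^{p'}(\Omega)$, and up to a subsequence $g_n\rightharpoonup g_\infty$. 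I then identify $g_\infty$ on $\{v>0\}$, $\{v<0\}$, $\{v=0\}$: on $\{v>0\}$, since $u_n\to 0^+$ pointwise, \ref{h23} traps $f(x,u_n)/u_n^{p-1}$ between $\eta_1-\eps$ and $\eta_2+\eps$ eventually, giving $g_\infty=\hat g_0 v^{p-1}$ with $\eta_1\le \hat g_0\le\eta_2$; on $\{v<0\}$ we have $u_n\to 0^-$ and \ref{h25} forces $f(x,u_n)/(|u_n|^{p-2}u_n)\to 0$, so $g_n\to 0$; on $\{v=0\}$ the global bound gives $|g_n|\le C|v_n|^{p-1}\to 0$. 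Testing the rescaled equation with $v_n-v$ and using the $(S)_+$-property of $A$ yields $v_n\to v$ strongly, so $\|v\|=1$. Testing the limit equation with $-v^-$ and invoking \eqref{pm} kills both right-hand terms (since $g_\infty\equiv 0$ on $\{v\le 0\}$ and $v^+v^-=0$), so $v\ge 0$ and $v\not\equiv 0$. Hence $v$ is a nonnegative nontrivial eigenfunction of $\fpl$ with weight $\tilde g_0=(1-t)\hat g_0+tm_0$ (satisfying $\eta_1\le\tilde g_0\le\eta_2$ in $\Omega$, $\tilde g_0\not\equiv\lambda_1$) and eigenvalue $1$. Proposition \ref{ev1} forces $v$ to be principal (the only non-nodal eigenfunction), so $1=\lambda_1(\tilde g_0)$; but strict monotonicity of $\lambda_1$ yields $\lambda_1(\tilde g_0)<\lambda_1(\lambda_1)=1$, a contradiction.

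Having established $\deg_{(S)_+}(A-N_f,B_r(0),0)=\deg_{(S)_+}(A-K_{m_0}^+,B_r(0),0)$ for $r\in(0,r_0]$, I would compute the latter via the perturbation homotopy $\hat h_0(t,u)=A(u)-K_{m_0}^+(u)-t\beta_0$, for some fixed $\beta_0\in L^\infty(\Omega)_+\setminus\{0\}$. At $t=0$, any zero $u$ of $A-K_{m_0}^+$ satisfies (after testing with $-u^-$) $u\ge 0$, and then $u$ is a nonnegative eigenfunction of $\fpl$ with weight $m_0$ and eigenvalue $1$; but $\lambda_1(m_0)<1$ and higher eigenfunctions are nodal by Proposition \ref{ev1}, forcing $u=0$. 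At $t>0$ the same test again gives $u\ge 0$, whence $\fpl u=m_0 u^{p-1}+t\beta_0$, and Lemma \ref{amp} (applicable since $1\ge\lambda_1(m_0)$) forces $u^-\not\equiv 0$, a contradiction. So $\hat h_0$ does not vanish on $\overline{B_r(0)}\setminus\{0\}$, and by homotopy invariance together with the solution property,
\[\deg_{(S)_+}(A-K_{m_0}^+,B_r(0),0)=\deg_{(S)_+}(A-K_{m_0}^+-\beta_0,B_r(0),0)=0,\]
since $A-K_{m_0}^+-\beta_0$ has no zero at all.

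The main obstacle I anticipate is the limit identification of $g_\infty$ on $\{v\le 0\}$: the crux is to combine pointwise vanishing (from the superlinear decay \ref{h25} at $0^-$) with the uniform $L^{p'}$-bound to conclude $g_\infty\equiv 0$ there, via the $\eps$-approximation technique of Lemma \ref{big}. It is precisely this superlinearity at $0^-$ that collapses the limit problem onto the positive cone, where non-resonance $\eta_1\not\equiv\lambda_1$ and strict monotonicity of $\lambda_1$ provide the final contradiction.
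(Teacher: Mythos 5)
Your argument reproduces the paper's proof essentially verbatim: same truncated weight operator $K_{m_0}^+$, same two-stage homotopy (first from $A-N_f$ to $A-K_{m_0}^+$, then from $A-K_{m_0}^+$ to $A-K_{m_0}^+-\beta_0$), same normalization and weak-limit identification of $g_n$ using \ref{h23} on $\{v>0\}$ and \ref{h25} on $\{v\le 0\}$, same test with $-v^-$ together with \eqref{pm} to conclude $v\ge 0$, and the same appeals to Proposition \ref{ev1} and Lemma \ref{amp} to close the contradictions. The only cosmetic difference is that you get the vanishing of the final degree directly from the solution property (noting $A-K_{m_0}^+-\beta_0$ has no zeros at all, including $u=0$), whereas the paper phrases it as a contradiction, but these are the same observation.
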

\begin{proof}
Fix $m_0\in L^\infty(\Omega)$ s.t.\ $\eta_1\le m_0\le\eta_2$ in $\Omega$, and define the map $K^+_{m_0}:\w\to W^{-s,p'}(\Omega)$ as above. At first we follow the proof of Lemma \ref{sml}. We define a $(S)_+$-homotopy $h^+_0:[0,1]\times\w\to W^{-s,p'}(\Omega)$ by setting for all $(t,u)\in[0,1]\times\w$
\[h^+_0(t,u) = A(u)-(1-t)N_f(u)-tK^+_{m_0}(u).\]
We claim that there exists $r_0>0$ s.t.\ 
\beq\label{asml1}
h^+_0(t,u) \neq 0 \ \text{for all $t\in[0,1]$, $0<\|u\|\le r_0$.}
\eeq
Arguing by contradiction, assume that there exist sequences $(t_n)$ in $[0,1]$, $(u_n)$ in $\w\setminus\{0\}$ s.t.\ $u_n\to 0$ in $\w$ and $h^+_0(t_n,u_n)=0$ in $W^{-s,p'}(\Omega)$ for all $n\in\N$. Set for all $n\in\N$
\[v_n = \frac{u_n}{\|u_n\|}.\]
Passing if necessary to a subsequence, we have $t_n\to t$ as well as $v_n\rightharpoonup v$ in $\w$, $v_n\to v$ in $L^p(\Omega)$, and $v_n(x)\to v(x)$ for a.e.\ $x\in\Omega$. Besides, for all $n\in\N$ we have in $W^{-s,p'}(\Omega)$
\beq\label{asml2}
A(v_n) = (1-t_n)g_n+t_nK^+_{m_0}(v_n),
\eeq
where we have set for all $x\in\Omega$
\[g_n(x) = \frac{f(x,u_n(x))}{\|u_n\|^{p-1}}.\]
Reasoning as in Lemma \ref{sml} we see that $(g_n)$ is a bounded sequence in $L^{p'}(\Omega)$, so passing to a further subsequence we have $g_n\rightharpoonup g_0$ in $L^{p'}(\Omega)$. We claim that there exists $\hat g_0\in L^\infty(\Omega)$ s.t.\ in $\Omega$
\beq\label{asml3}
g_0 = \hat g_0(v^+)^{p-1}, \ \eta_1\le\hat g_0\le\eta_2.
\eeq
Indeed, define the set
\[\Omega^+ = \big\{x\in\Omega:\,v(x)>0\big\},\]
and for all $\eps>0$, $n\in\N$
\[\Omega^+_{\eps,n} = \Big\{x\in\Omega:\,u_n(x)>0,\,\eta_1(x)-\eps\le\frac{f(x,u_n(x))}{u_n^{p-1}(x)}\le\eta_2(x)+\eps\Big\}.\]
By ${\bf H}_2$ \ref{h23} we have $\chi_{\Omega^+_{\eps,n}}\to 1$ a.e.\ in $\Omega^+$, with bounded convergence, hence $\chi_{\Omega^+_{\eps,n}}g_n\rightharpoonup g_0$ in $L^{p'}(\Omega^+)$. Recalling the definition of $v_n$, for all $\eps>0$ and all $n\in\N$ big enough, in $\Omega^+$ we have $v_n>0$ and
\[\chi_{\Omega^+_{\eps,n}}(\eta_1-\eps)v_n^{p-1} \le \chi_{\Omega^+_{\eps,n}}g_n \le \chi_{\Omega^+_{\eps,n}}(\eta_2+\eps)v_n^{p-1}.\]
Passing to the limit as $n\to\infty$ and then as $\eps\to 0^+$, we get in $\Omega^+$
\[\eta_1 v^{p-1} \le g_0 \le \eta_2 v^{p-1}.\]
Similarly, using ${\bf H}_2$ \ref{h25} we get $g_0=0$ in $\Omega\setminus\Omega^+$, which completes the argument for \eqref{asml3}.
\vskip2pt
\noindent
Now we go back to \eqref{asml2}. Testing with $v_n-v\in\w$, we have
\[\langle A(v_n),v_n-v\rangle = (1-t_n)\int_\Omega g_n(x)(v_n-v)\,dx+t_n\int_\Omega m_0(x)(v_n^+)^{p-1}(v_n-v)\,dx,\]
and the latter tends to $0$ as $n\to\infty$. So, by the $(S)_+$-property of $A$ we have $v_n\to v$ in $\w$, hence $\|v\|=1$. Passing to the limit in \eqref{asml2} as $n\to\infty$ and using \eqref{asml3}, we get in $W^{-s,p'}(\Omega)$
\beq\label{asml4}
A(v) = \tilde g_0(v^+)^{p-1},
\eeq
where we have set for all $x\in\Omega$
\[\tilde g_0(x) = (1-t)\hat g_0+tm_0(x).\]
Clearly $\tilde g_0\in L^\infty(\Omega)$, and by \eqref{asml3} and the choice of $m_0$ we have $\eta_1\le\tilde g_0\le\eta_2$ in $\Omega$. Testing \eqref{asml4} with $-v^-\in\w$ and applying \eqref{pm}, we have
\begin{align*}
\|v^-\|^p &\le \langle A(v),-v^-\rangle \\
&= \int_\Omega\tilde g_0(x)(v^+)^{p-1}(-v^-)\,dx = 0,
\end{align*}
hence $v\ge 0$ in $\Omega$. We can therefore rephrase \eqref{asml4} as the weighted eigenvalue problem
\beq\label{asml5}
\begin{cases}
\fpl v = \tilde g_0(x)v^{p-1} & \text{in $\Omega$} \\
v = 0 & \text{in $\Omega^c$.}
\end{cases}
\eeq
By ${\bf H}_2$ \ref{h23} we have $\tilde g_0\ge\lambda_1$ in $\Omega$ with $\tilde g_0\not\equiv\lambda_1$, so by Proposition \ref{ev1}
\[\lambda_1(\tilde g_0) < \lambda_1(\lambda_1) = 1.\]
Thus, $v\neq 0$ is a non-principal eigenfunction of \eqref{asml5}, hence nodal (again by Proposition \ref{ev1}), against $v\ge 0$. This contradiction proves \eqref{asml1}.
\vskip2pt
\noindent
We apply Proposition \ref{deg} \ref{d4} (homotopy invariance) and get for all $r\in(0,r_0]$
\beq\label{asml6}
\deg_{(S)_+}(A-N_f,B_r(0),0) = \deg_{(S)_+}(A-K^+_{m_0},B_r(0),0).
\eeq
Now there remains to compute the right-hand side. We proceed as in Lemma \ref{abig}, fixing $\beta_0\in L^\infty(\Omega)_+\setminus\{0\}$ and setting for all $(t,u)\in[0,1]\times\w$
\[\hat h^+_0(t,u) = A(u)-K^+_{m_0}(u)-t\beta_0.\]
Clearly, $\hat h^+_0:[0,1]\times\w\to W^{-s,p'}(\Omega)$ is a $(S)_+$-homotopy. Again we claim that for all $t\in[0,1]$ and all $u\in\w\setminus\{0\}$
\beq\label{asml7}
\hat h^+_0(t,u) \neq 0.
\eeq
Arguing by contradiction, let $t\in[0,1]$, $u\in\w\setminus\{0\}$ be s.t.\ in $W^{-s,p'}(\Omega)$
\beq\label{asml8}
A(u) = K^+_{m_0}(u)+t\beta_0.
\eeq
We distinguish two cases:
\begin{itemize}[leftmargin=1cm]
\item[$(a)$] If $t=0$, then \eqref{asml8} rephrases as
\[A(u) = K^+_{m_0}(u).\]
Testing with $-u^-\in\w$ and applying \eqref{pm} we have
\begin{align*}
\|u^-\|^p &\le \langle A(u),-u^-\rangle \\
&= \int_\Omega m_0(x)(u^+)^{p-1}(-u^-)\,dx = 0,
\end{align*}
so $u\ge 0$ in $\Omega$. Then $u$ solves in fact
\[\begin{cases}
\fpl u = m_0(x)u^{p-1} & \text{in $\Omega$} \\
u = 0 & \text{in $\Omega^c$.}
\end{cases}\]
By ${\bf H}_2$ \ref{h23} we have $m_0\ge\lambda_1$ in $\Omega$ with $m_0\not\equiv\lambda_1$, so by Proposition \ref{ev1}
\[\lambda_1(m_0) < \lambda_1(\lambda_1) = 1.\]
So, $u$ is a non-principal eigenfunction with weight $m_0$, hence nodal, against $u\ge 0$.
\item[$(b)$] If $t\in(0,1]$, then testing \eqref{asml8} with $-u^-\in\w$ and applying \eqref{pm} we have
\begin{align*}
\|u^-\|^p &\le \langle A(u),-u^-\rangle \\
&= \int_\Omega m_0(x)(u^+)^{p-1}(-u^-)\,dx+t\int_\Omega\beta_0(x)(-u^-)\,dx \le 0,
\end{align*}
so $u\ge 0$ in $\Omega$. Then \eqref{asml8} becomes
\[\begin{cases}
\fpl u = m_0(x)u^{p-1}+t\beta_0(x) & \text{in $\Omega$} \\
u = 0 & \text{in $\Omega^c$,}
\end{cases}\]
with $1>\lambda_1(m_0)$ as above and $u\ge 0$ in $\Omega$, against Lemma \ref{amp}.
\end{itemize}
In both cases we reach a contradiction, thus proving \eqref{asml7}. This in turn allows us to apply Proposition \ref{deg} \ref{d4} (homotopy invariance) and have for all $r>0$
\beq\label{asml9}
\deg_{(S)_+}(A-K^+_{m_0},B_r(0),0) = \deg_{(S)_+}(A-K^+_{m_0}-\beta_0,B_r(0),0).
\eeq
To conclude, we claim that for all $r>0$
\beq\label{asml10}
\deg_{(S)_+}(A-K^+_{m_0}-\beta_0,B_r(0),0) = 0.
\eeq
Arguing by contradiction, assume that for some $r>0$
\[\deg_{(S)_+}(A-K^+_{m_0}-\beta_0,B_r(0),0) \neq 0.\]
By Proposition \ref{deg} \ref{d5} (solution property) there exists $u\in B_r(0)$ s.t.\
\[\begin{cases}
\fpl u = m_0(x)(u^+)^{p-1}+\beta_0(x) & \text{in $\Omega$} \\
u = 0 & \text{in $\Omega^c$.}
\end{cases}\]
Arguing as in case $(b)$ we see that $u\ge 0$ in $\Omega$, violating Lemma \ref{amp}. So \eqref{asml10} is proved.
\vskip2pt
\noindent
Finally, concatenating \eqref{asml6}, \eqref{asml9}, and \eqref{asml10} we get for all $r\in(0,r_0]$
\[\deg_{(S)_+}(A-N_f,B_r(0),0) = 0,\]
which concludes the proof.
\end{proof}

\noindent
Our multiplicity result for this case is the following:

\begin{theorem}\label{ajmp}
If ${\bf H}_2$ holds, then problem \eqref{dir} has at least two nontrivial solutions $u_0 \in C_s^{\alpha}(\overline{\Omega})\cap{\rm int}(C^0_s(\overline\Omega)_+)$, $u_1\in C_s^{\alpha}(\overline{\Omega})\setminus\{0\}$.
\end{theorem}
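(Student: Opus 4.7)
The plan is to combine the three lemmas \ref{aexi}, \ref{abig}, and \ref{asml} via the domain additivity of the $(S)_+$-degree, in complete analogy with the proof of Theorem \ref{jmp}. Lemma \ref{aexi} already produces the first solution $u_0 \in C_s^{\alpha}(\overline{\Omega}) \cap \mathrm{int}(C^0_s(\overline{\Omega})_+)$, which carries the prescribed positivity and regularity, and also delivers the local degree formula $\deg_{(S)_+}(A-N_f, B_\rho(u_0), 0) = 1$ for all sufficiently small $\rho$. Lemma \ref{abig} gives $\deg_{(S)_+}(A-N_f, B_R(0), 0) = 0$ for all $R$ large, and Lemma \ref{asml} gives $\deg_{(S)_+}(A-N_f, B_r(0), 0) = 0$ for all $r$ small.

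I would then select $\rho, r > 0$ small enough and $R > 0$ large enough so that the closed balls $\overline{B_\rho(u_0)}$ and $\overline{B_r(0)}$ are disjoint and both contained in $B_R(0)$. Invoking the standing assumption that \eqref{dir} has only finitely many solutions, I can shrink $\rho, r$ further (and if necessary enlarge $R$ slightly) so that $A(u) - N_f(u) \neq 0$ on each of $\partial B_\rho(u_0)$, $\partial B_r(0)$, and $\partial B_R(0)$. Setting $U = B_R(0) \setminus \overline{B_\rho(u_0) \cup B_r(0)}$, Proposition \ref{deg} \ref{d2} (domain additivity) then yields
\[
0 = \deg_{(S)_+}(A-N_f, B_R(0), 0) = 1 + 0 + \deg_{(S)_+}(A-N_f, U, 0),
\]
so $\deg_{(S)_+}(A-N_f, U, 0) = -1 \neq 0$. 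By Proposition \ref{deg} \ref{d5} (solution property) there exists $u_1 \in U$ with $A(u_1) - N_f(u_1) = 0$, i.e.\ $u_1$ is a weak solution of \eqref{dir}; by construction $u_1 \neq 0$ and $u_1 \neq u_0$, and by Proposition \ref{reg} we have $u_1 \in C_s^{\alpha}(\overline{\Omega})$, completing the proof.

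There is no real obstacle beyond what is already encoded in the three preparatory lemmas; the only delicate point is the bookkeeping on the radii $\rho, r, R$ and the mild use of the finiteness assumption to guarantee that the boundaries of the three balls avoid the solution set, so that the degree is well defined on each piece and additivity applies cleanly. (Alternatively, as in the remark following Theorem \ref{jmp}, one could equivalently argue by excision from $B_R(0)$ of the closed set $\overline{B_\rho(u_0) \cup B_r(0)}$, using $\deg_{(S)_+}(A-N_f, B_R(0), 0) - \deg_{(S)_+}(A-N_f, B_\rho(u_0), 0) - \deg_{(S)_+}(A-N_f, B_r(0), 0) = -1$.)
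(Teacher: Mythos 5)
Your proposal follows the paper's argument step for step: you invoke Lemmas \ref{aexi}, \ref{abig}, and \ref{asml} to get local degrees $1$, $0$, $0$, then use domain additivity and the finiteness assumption to extract a degree of $-1$ on the annular region, and conclude via the solution property and Proposition \ref{reg}. This is exactly the paper's proof, so there is nothing to add.
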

\begin{proof}
The proof is similar to that of Theorem \ref{jmp}. First, from ${\bf H}_2$ we know that $0$ solves \eqref{dir}. By Lemma \ref{aexi} there exists a solution $u_0 \in C_s^{\alpha}(\overline{\Omega})\cap{\rm int}(C^0_s(\overline\Omega)_+)$ s.t.\ for all $\rho>0$ small enough
\[\deg_{(S)_+}(A-N_f,B_\rho(u_0),0) = 1.\]
Besides, by Lemma \ref{abig} we have for all $R>0$ big enough
\[\deg_{(S)_+}(A-N_f,B_R(0),0) = 0,\]
and by Lemma \ref{asml} we have for all $r>0$ small enough
\[\deg_{(S)_+}(A-N_f,B_r(0),0) = 0.\]
Choosing $\rho,r>0$ even smaller and $R>0$ bigger if necessary, we can ensure
\[\overline B_\rho(u_0)\cup\overline B_r(0) \subset B_R(0), \ \overline B_\rho(u_0)\cap\overline B_r(0) = \emptyset.\]
By our standing assumption that $A-N_f$ vanishes at finitely many points, we can find $\rho,r>0$ s.t.\ $A(u)-N_f(u)\neq 0$ for all $u\in\partial B_\rho(u_0)\cup\partial B_r(0)$. So, by Proposition \ref{deg} \ref{d2} (domain additivity) we have
\begin{align*}
\deg_{(S)_+}(A-N_f,B_R(0),0) &= \deg_{(S)_+}(A-N_f,B_\rho(u_0),0)+\deg_{(S)_+}(A-N_f,B_r(0),0) \\
&+ \deg_{(S)_+}(A-N_f,B_R(0)\setminus\overline{(B_\rho(u_0)\cup B_r(0))},0),
\end{align*}
which amounts to
\[\deg_{(S)_+}(A-N_f,B_R(0)\setminus\overline{(B_\rho(u_0)\cup B_r(0))},0) = -1.\]
By Proposition \ref{deg} \ref{d5} (solution property), there exists $u_1\in B_R(0)\setminus\overline{(B_\rho(u_0)\cup B_r(0))}$ s.t.\ in $W^{-s,p'}(\Omega)$
\[A(u_1)-N_f(u_1) = 0.\]
By Proposition \ref{reg}, finally, we conclude that $u_1\in C^\alpha_s(\overline\Omega)\setminus\{0,u_0\}$ is a second solution of \eqref{dir}.
\end{proof}

\begin{remark}
Formally, Theorem \ref{ajmp} above is analogous to \cite[Theorem 32]{APS1}. Nevertheless, the nonlocal nature of the operator $\fpl$ bears significant differences. The main issue is that, in general, for $u\in\w$ we have
\[\fpl u \neq \fpl u^+-\fpl u^-,\]
which demands a different technique in dealing with the positive and negative parts, with respect to the case of the $p$-Laplacian (compare for instance \cite[Proposition 30]{APS1} to our Lemma \ref{abig}).
\end{remark}

\vskip4pt
\noindent
{\bf Acknowledgement.} Both authors are members of GNAMPA (Gruppo Nazionale per l'Analisi Matematica, la Probabilit\`a e le loro Applicazioni) of INdAM (Istituto Nazionale di Alta Matematica 'Francesco Severi') and are supported by the research project \emph{Evolutive and Stationary Partial Differential Equations with a Focus on Biomathematics}, funded by Fondazione di Sardegna (2019). A.\ Iannizzotto is also supported by the grant PRIN-2017AYM8XW: \emph{Nonlinear Differential Problems via Variational, Topological and Set-valued Methods}. We warmly thank L.\ Brasco for precious suggestions about the eigenvalue problem.

\end{document}